\newtheorem{condition}{Condition}[section]
\newtheorem{theorem}{Theorem}[section]
\newtheorem{lemma}{Lemma}[section]
\newtheorem{cor}{Corollary}[section]
\newtheorem{prop}{Proposition}[section]
\newtheorem{definition}{Definition}[section]
\newcommand{\gl}{{\lambda}}
\newcommand{\gk}{{\kappa}}
\newcommand{\gb}{{\beta}}
\DeclareMathOperator{\free}{free}
\DeclareMathOperator{\succf}{succ}
\begin{document}
\title{Towards Absolutely Continuous Bernoulli Convolutions}
\author{Alex Batsis and Tom Kempton}
\maketitle

\begin{abstract}{\noindent 
We show how to turn the question of the absolute continuity of Bernoulli convolutions into one of counting the growth of the number of overlaps in the system. When the contraction parameter is a hyperbolic algebraic integer, we turn this question of absolute continuity into a question involving the ergodic theory of cocycles over domain exchange transformations.}
\end{abstract}

\section{Introduction}
Bernoulli convolutions are a simple family of overlapping self-similar measures. For $\beta\in(1,2)$ the Bernoulli convolution $\nu_\beta$ is defined be the weak$^*$ limit of the sequence $\nu_{\beta,n}$ of probability measures given by 
\[
\nu_{\beta,n}=\sum_{a_1\cdots a_n\in\{0,1\}^n} \frac{1}{2^n}\delta_{\sum_{i=1}^na_i\beta^{-i}}.
\]

The question of the absolute continuity of Bernoulli convolutions goes back to work of Erd\H{o}s in 1939 \cite{ErdosPisot}, in which it was shown that the $\nu_{\beta}$ is singular when $\beta$ is a Pisot number. These remain the only known examples of singular Bernoulli convolutions. In the other direction, Garsia, Varj\'u and Kittle have each given examples of classes of absolutely continuous Bernoulli convolutions associated with algebraic parameters \cite{GarsiaAC, VarjuAC, Kittle}. Solomyak showed that the set of $\beta\in(1,2)$ giving rise to singular Bernoulli convolutions has Lebesgue measure zero \cite{SolomyakAC}, this result was improved by Shmerkin who showed that the set has Hausdorff dimension zero \cite{ShmerkinAC}. 

If instead of asking for absolute continuity of $\nu_{\beta}$ we ask whether $\dim_H(\nu_{\beta})=1$ then a lot more is known, mainly stemming from work of Hochman \cite{HochmanInverse}. Several recent articles give conditions under which the Bernoulli convolution associated to an algebraic $\beta$ has dimension one \cite{VarjuBreuillard1,VarjuBreuillard2, HKPS} or show that the Hausdorff dimension can be computed \cite{AFKP}. Most significantly, Varj\'u has shown that $\dim_H(\nu_{\beta})=1$ whenever $\beta$ is transcendental \cite{VarjuTranscendental}. Finally we mention recent papers of Feng and Feng and of Kleptsyn, Pollicott and Vytnova which give remarkable lower bounds for the $\dim_H(\nu_{\beta})$ which hold for all $\beta\in(1,2)$ \cite{FengFeng, KleptsynPollicottVytnova}. For a recent summary see \cite{VarjuSummary2}. 

In this article we give new conditions for the absolute continuity of Bernoulli convolutions. Our first result frames the question of absolute continuity in terms of counting overlaps. Let $\mathcal N_n$ be the number of overlaps at the $n$th level of the construction of the Bernoulli convolution. This is equal to the number of pairs of words $a_1\cdots a_n, b_1,\cdots b_n\in\{0,1\}^n$ for which $\left|\sum_{i=1}^n a_i\beta^{n-i}-\sum_{i=1}^n b_i\beta^{n-i}\right|<\frac{1}{\beta-1}$.

\begin{prop}\label{introprop}[Special Case of Theorem \ref{CountingTheorem}]
If there exists $C>0$ such that $\mathcal N_n\leq C\left(\frac{4}{\beta}\right)^n$ for all $n\in\mathbb N$ then the Bernoulli convolution $\nu_{\beta}$ is absolutely continuous.
\end{prop}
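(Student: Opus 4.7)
The plan is to deduce absolute continuity from the existence of an $L^{2}$ density, by approximating $\nu_{\beta}$ with absolutely continuous measures obtained from smearing each atom of $\nu_{\beta,n}$ at the scale dictated by the overlap criterion, and then bounding the $L^{2}$ norm of the resulting densities directly by $\mathcal N_{n}$.

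Concretely, writing $x_{a}=\sum_{i=1}^{n} a_{i}\beta^{-i}$ and $c=1/(\beta-1)$, I would let $\mu_{n}$ be the convolution of $\nu_{\beta,n}$ with the uniform probability measure on $[-c\beta^{-n}/2,\,c\beta^{-n}/2]$, giving density
\[
f_{n}\;=\;\sum_{a\in\{0,1\}^{n}}\frac{(\beta/2)^{n}}{c}\,\mathbf{1}_{[x_{a}-c\beta^{-n}/2,\,x_{a}+c\beta^{-n}/2]}.
\]
Expanding $\int f_{n}^{2}\,dx$ as a double sum over pairs $(a,b)\in\{0,1\}^{n}\times\{0,1\}^{n}$, each term is proportional to the length of the intersection of the two intervals around $x_{a}$ and $x_{b}$, which is at most $c\beta^{-n}$ and is nonzero exactly when $|x_{a}-x_{b}|<c\beta^{-n}$. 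Multiplying this condition through by $\beta^{n}$ recovers the defining condition for $\mathcal N_{n}$, so a routine calculation yields
\[
\|f_{n}\|_{L^{2}}^{2}\;\leq\;\frac{1}{c}\left(\frac{\beta}{4}\right)^{n}\mathcal N_{n}.
\]
Under the hypothesis $\mathcal N_{n}\leq C(4/\beta)^{n}$, this is uniformly bounded by $C/c$.

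Finally, I would note that $\mu_{n}\to\nu_{\beta}$ in the weak-$*$ topology of measures, since $\nu_{\beta,n}\to\nu_{\beta}$ and the smearing kernel is an approximate identity supported on a vanishing window. By Banach--Alaoglu, the $L^{2}$-bounded sequence $(f_{n})$ admits a weakly convergent subsequence $f_{n_{k}}\rightharpoonup f$ in $L^{2}$. Because all $\mu_{n}$ and $\nu_{\beta}$ are supported on a common compact interval, testing both convergences against any $\phi\in C_{c}(\mathbb{R})$ (which lies in $L^{2}$ on this interval) gives $\int\phi f\,dx=\int\phi\,d\nu_{\beta}$, identifying $\nu_{\beta}$ as the measure with $L^{2}$ density $f$ and proving absolute continuity.

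The main delicate point to get right is matching the smearing scale with the overlap criterion: the choice $c=1/(\beta-1)$ is precisely what makes the factor $(\beta/4)^{n}$ appearing in the $L^{2}$ estimate cancel the assumed growth rate of $\mathcal N_{n}$, and any other scale fails to close the argument. The remaining ingredients (the approximate identity reasoning and the identification of the weak $L^{2}$ limit with the density of $\nu_{\beta}$) are standard and should not present serious difficulty.
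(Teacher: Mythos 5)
Your proposal is correct, and its quantitative core is in fact the same as the paper's: your smeared density $f_n$ is, up to the normalisation constant $1/c$ and a harmless recentering of the windows, exactly the function $f_n=P^n(\chi_{I^+})$ the paper works with (each branch $T_{a_1}^{-1}\circ\cdots\circ T_{a_n}^{-1}(I^+)$ is an interval of length $c\beta^{-n}$ around the point $x_a$), and your bound $\|f_n\|_{L^2}^2\leq \frac{1}{c}(\beta/4)^n\mathcal N_n$ is the one-dimensional case of Proposition \ref{estimateprop}. Where you genuinely diverge is in how the uniform $L^2$ bound is converted into absolute continuity. The paper passes to Ces\`aro averages, invokes Banach--Saks to produce an $L^2$ fixed point of the transfer operator $P$ (Lemma \ref{convlemma}), and then shows in Proposition \ref{densityprop} that such a fixed point integrates to the self-affine/self-similar measure. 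You instead take a weak $L^2$ subsequential limit of the $f_n$ themselves and identify it with $\nu_\beta$ directly, using that $\nu_{\beta,n}$ convolved with a shrinking approximate identity still converges weak$^*$ to $\nu_\beta$; this bypasses the fixed-point formulation entirely and is the more elementary route for this special case (the only small points to make explicit are that the subsequential weak limit $f$ is nonnegative a.e.\ and that two Radon measures agreeing on $C_c(\mathbb R)$ coincide, so $\nu_\beta=f\,dx$). What the paper's formulation buys in exchange is the operator-theoretic statement that the limit is a fixed point of $P$, i.e.\ an $L^2$ solution of the density self-similarity equation, which is the viewpoint the rest of the paper builds on and which is stated uniformly for the self-affine measures $\nu_{\underline\beta}$ of Theorem \ref{CountingTheorem}; your argument would also extend to that setting by smearing with the rescaled rectangle $I^+$ instead of an interval.
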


The remainder of the article focuses on hyperbolic algebraic integers and is spent recasting this counting question in terms of equidistribution of a family of probability measures with respect to Lebesgue measure. This family of measures is supported on a cut and project set, which allows us to turn the question of the absolute continuity of certain Bernoulli convolutions into a question relating to the ergodic theory of cocycles over domain exchange transformations. Our hope is that, with further work, our techniques will give rise to a proof that the Bernoulli convolution $\nu_{\beta}$ is absolutely continuous whenever $\beta\in(1,2)$ is algebraic and has at least one Galois conjugate larger than one in absolute value, with no Galois conjugates having absolute value one. Our final theorem is the following.

\begin{theorem}\nonumber[Stated Precisely as Theorem 
\ref{FinThm}.]
Assume that $\beta\in(1,2)$ is an algebraic integer with no Galois conjugates of absolute value one and at least one real Galois conjugate of absolute value larger than one. There are checkable assumptions (see Theorem \ref{FinThm}) under which there exist a fractal $\mathcal R$, an interval $I$, a  domain exchange transformation $T:I\times\mathcal R\to I\times \mathcal R$ and a function $f:\mathcal R\to \mathbb R^+$ such that, if the projection onto $I$ of the sequence of measures
\[
\sum_{i=1}^n f(0)f(T(0))\cdots f(T^{n-1}(0))\delta_{T^{n-1}(0)}
\]
converges to Lebesgue measure sufficiently quickly then the Bernoulli convolution $\nu_{\beta}$ is absolutely continuous.
\end{theorem}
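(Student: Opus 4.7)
The plan is to apply Proposition \ref{introprop}, reducing absolute continuity of $\nu_\beta$ to the exponential bound $\mathcal{N}_n \leq C(4/\beta)^n$ on the overlap count. Writing $c_i := a_i-b_i \in \{-1,0,1\}$, each overlapping pair of $\{0,1\}$-words is determined by a ternary word $c_1\cdots c_n$ together with a combinatorial multiplicity $2^{\#\{i:c_i=0\}}$, subject to $|\sum_{i=1}^n c_i\beta^{n-i}|<1/(\beta-1)$. Thus $\mathcal{N}_n$ is a weighted count of points in $\mathbb{Z}[\beta]$ falling inside the overlap window on the real line, where the $2^{\#\{i:c_i=0\}}$ factor absorbs the merging of the two equal pairs $(0,0),(1,1)$ of $\{0,1\}$ digits into the single ternary difference $0$.

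Under the Galois hypothesis the standard cut and project construction is available. The diagonal embedding via the non-trivial Galois conjugates $\beta_2,\ldots,\beta_d$ sends $\mathbb{Z}[\beta]$ to a lattice in a product of archimedean completions, all of whose factors have modulus different from one. Iterated multiplication by $\beta^{-1}$ acts as a contraction on the internal directions corresponding to conjugates of absolute value larger than one, so the internal images of the admissible sums $\sum c_i \beta^{n-i}$ remain uniformly bounded; taking closure yields the Rauzy-type fractal $\mathcal{R}$. The interval $I$ is supplied by the expanding real conjugate guaranteed by the hypothesis, which survives as a one-dimensional expanding factor in the internal space.

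The domain exchange $T:I\times\mathcal{R}\to I\times\mathcal{R}$ then encodes "choose the next ternary digit $c\in\{-1,0,1\}$ and multiply by $\beta^{-1}$": each choice of $c$ slices $\mathcal{R}$ into a piece translated to a new location, while the $I$ coordinate is renormalized to an isometry so that the product becomes a genuine domain exchange. The weight $f:\mathcal{R}\to\mathbb{R}^+$ bundles the combinatorial factor ($2$ when the locally admissible digit is $0$, and $1$ when $c=\pm 1$) with the Jacobian needed to produce the target growth rate: the $4$ in $4/\beta$ arises as the total weight $2+1+1$ summed over the three digits, and $1/\beta$ is the contraction of the expanding real conjugate after rescaling. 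With these definitions, $\mathcal{N}_n$ equals, up to a bounded multiplicative factor, the mass that the projection onto $I$ of the weighted orbital measure in the statement assigns to the overlap window, so the desired counting bound is equivalent to an estimate on the behaviour of these measures on intervals of fixed length.

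The central obstacle, and the reason the theorem is stated conditionally, is quantitative equidistribution of these weighted orbital measures. Qualitative equidistribution already requires an ergodic theorem for a cocycle over a domain exchange transformation, which is nontrivial on its own; what is actually needed is a rate of convergence toward Lebesgue measure on $I$ fast enough that the error does not overwhelm the overlap window when multiplied by the Birkhoff product $f(0)f(T(0))\cdots f(T^{n-1}(0))$. The remaining bookkeeping of the proof consists in tracking constants so that "sufficiently quickly" becomes a precise polynomial rate (after the exponential normalization) that can in principle be verified by renormalization of $T$ or by spectral/mixing input on the fibre system — making this a checkable criterion is precisely the content intended for Theorem \ref{FinThm}.
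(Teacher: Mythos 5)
Your opening reduction is in the spirit of the paper: absolute continuity follows from the overlap bound of Proposition \ref{introprop} (more precisely, for several conjugates of modulus larger than one you need the self-affine version, Theorem \ref{CountingTheorem}, with threshold $\left(4/|\beta_1\cdots\beta_d|\right)^n$ rather than $(4/\beta)^n$), and the rewriting of $\mathcal N_n$ as a count of ternary words $c_i=a_i-b_i$ with multiplicity $2^{\#\{i:c_i=0\}}$ inside a window, lifted to a cut and project set, is exactly how Sections 3 and 4 proceed. But you have misassigned the roles of the conjugates: the fractal $\mathcal R$ is the attractor of the IFS in the coordinates of the conjugates of modulus \emph{less} than one; the conjugates of modulus larger than one (other than one designated real conjugate) are controlled by the window condition defining the strip $S$, and the interval $I$ is the overlap window coming from $\beta$ itself, not from the extra real conjugate. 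That extra real conjugate of modulus larger than one is set aside as the ``free'' direction, and its only role is to order the points of $\bar X$ and to define the slabs $R_n$.

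The genuine gap is in the construction of $T$ and $f$, which is the actual content of Theorem \ref{FinThm}. In the paper, $T$ is not a ``choose the next ternary digit and multiply by $\beta^{-1}$'' renormalization (such a map has three branches and is not an exchange of domains); it is the projection to $D=I\times\mathcal R$ of the successor map $\succf$ on $\bar X$, where points are ordered by their free coordinate, and the facts that this projection is well defined, injective and piecewise a translation are Lemma \ref{T is DET}, resting on the checkable Condition \ref{condition 1} that $\bar X=\bar Z\cap\pi_c^{-1}(\operatorname{int}\mathcal R)\cap S$, which also guarantees that the $T$-orbit of $0$ enumerates $\bar X$ slab by slab. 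Likewise $f$ is not the digit weight $2,1,1$: it is the limit multiplicity $f(x)=\lim_n\bar\mu_n(x)/\lambda^n$ of Theorem \ref{CollectedThm}, and the nontrivial input (Theorem 1.3 of \cite{Measures}, used in Proposition \ref{structure of w}) is that the ratios $f(\succf(x))/f(x)$ depend only on the window coordinates, so the weights along the orbit form a cocycle over the domain exchange. Finally, the claim that $\mathcal N_n$ equals the mass of your orbital measures ``up to a bounded multiplicative factor'' is unjustified and is precisely where the quantitative work lies: the paper needs Lemma \ref{m bounds mn} ($\mathcal N_n\leq\lambda^n\bar\mu(R_n)/\bar\mu(0)$), Lemma \ref{4.4} comparing $\bar\mu(R_{n+1})$ with $\frac{1}{\lambda}\int g\,d\pi_e(\bar\mu|_{R_n})$ up to boundary errors, and Lemma \ref{R_n big}, which uses the second checkable hypothesis $\lambda<4/|\beta_1\cdots\beta_d|$ to make those errors summable. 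With your digit-weighted renormalization dynamics none of these identifications hold, and your last paragraph defers exactly the construction the theorem is supposed to supply.
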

If the function $f$ took values in a compact group $K$ then the Santos-Walkden version of the Wiener-Wintner ergodic theorem \cite{SantosWalkden} would give us the convergence that we need. As it is, further work on the ergodic theory of cocycles over domain exchange transformations is needed to use our techniques to prove that certain Bernoulli convolutions are absolutely continuous.

We illustrate our results by first looking at a particular example.

\subsection{A First Example:}\label{example}
Let $\beta \approx 1.513$ satisfy $\beta^4=\beta^3+\beta^2-\beta+1$. Then $\beta$ has one real Galois conjugate $\beta_2\approx -1.179$ and a pair of complex Galois conjugates which are less than one in modulus. We chose this example because it has no Galois conjugates of absolute value one (essential for our techniques) and because it is of small degree with only one Galois conjugate larger than one in modulus (which makes things easier to compute and to visualise).

Our first result, a special case of Theorem \ref{CountingTheorem}, gives conditions for the absolute continuity of $\nu_{\beta}$ in terms of the growth of the total number of overlaps at the $n$th level of the construction of the Bernoulli convolution. This is stated as Proposition \ref{introprop} above.



Unfortunately, estimating $\mathcal N_n$ is difficult. The bulk of this paper is dedicated to giving upper bounds via a geometric construction. 

We define the measure $\mu_n$ on $I:=\left[\frac{-1}{\beta-1},\frac{1}{\beta-1}\right]$ by
\[
\mu_n(A)=\#\{a_1\cdots a_n, b_1\cdots b_n\in\{0,1\}^n : \sum_{i=1}^n (a_i-b_i)\beta^{n-i}\in A\}.
\]
Then $\mathcal N_n=\mu_n(I)$. 

We want to understand the ratio $\frac{\mathcal N_{n+1}}{\mathcal N_n}$. Given $a_1\cdots a_n, b_1\cdots b_n$ contributing to the count for $\mathcal N_n$, we ask how many of the four choices of $a_{n+1}, b_{n+1}\in\{0,1\}^2$ give rise to a pair $a_1,\cdots a_{n+1}, b_1\cdots b_{n+1}$ contributing to the count for $\mathcal N_{n+1}$. This boils down to the number of $a_{n+1}, b_{n+1}$ for which
\[
\beta\left(\sum_{i=1}^n (a_i-b_i)\beta^{n-i}\right)+(a_{n+1}-b_{n+1})\in I,
\]
which in turn depends only on the value of $\sum_{i=1}^n (a_i-b_i)\beta^{n-i}$. Using this, we show in Section \ref{Sec3} that the ratio $\frac{\mathcal N_{n+1}}{\mathcal N_n}$ can be expressed as the integral of a step function $g$ with respect to the measure $\mu_n$. This yields the following corollary.
\begin{prop}\label{P1.2}
Suppose that the measures $\mu_n$ equidistribute with respect to Lebesgue measure on $I$ with certain rate (made precise in Theorem \ref{weakconvwithrate} and the comments afterwards. Then the Bernoulli convolution $\nu_{\beta}$ is absolutely continuous.
\end{prop}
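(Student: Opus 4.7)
The plan is to reduce the question of absolute continuity to the counting bound $\mathcal N_n \leq C(4/\beta)^n$ provided by Proposition~\ref{introprop}. The starting point is the identity derived in the discussion immediately preceding the proposition, namely
\[
\mathcal N_{n+1} = \int_I g(x)\, d\mu_n(x),
\]
where $g$ is the step function on $I$ that counts, for each $x$, how many of the four pairs $(a_{n+1},b_{n+1})\in\{0,1\}^2$ push $\beta x + (a_{n+1}-b_{n+1})$ back into $I$. Explicitly $g = 2\,\mathbf{1}_{I/\beta} + \mathbf{1}_{(I-1)/\beta} + \mathbf{1}_{(I+1)/\beta}$; each of the three constituent intervals, once clipped to $I$, has Lebesgue measure $2/(\beta(\beta-1))$, and since $|I|=2/(\beta-1)$ a direct computation gives $\frac{1}{|I|}\int_I g\,d\Leb = \frac{4}{\beta}$.

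The next step is to feed in the equidistribution hypothesis to approximate $\int_I g\,d\mu_n$ by $\frac{\mathcal N_n}{|I|}\int_I g\,d\Leb = \frac{4}{\beta}\mathcal N_n$. Because $g$ is a step function with only finitely many pieces, the error in this approximation is controlled by a constant times the relative discrepancy of $\mu_n/\mathcal N_n$ against $\Leb/|I|$ on the few intervals of constancy of $g$. Writing $r_n$ for this discrepancy, we obtain
\[
\frac{\mathcal N_{n+1}}{\mathcal N_n} \leq \frac{4}{\beta}(1+r_n),
\]
so by iteration $\mathcal N_n \leq \mathcal N_0 (4/\beta)^n \prod_{k=0}^{n-1}(1+r_k)$. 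Provided the rate of equidistribution is such that $\sum_n r_n < \infty$ (this is the quantitative form of equidistribution that must be assumed), the infinite product is bounded and we conclude $\mathcal N_n \leq C(4/\beta)^n$. Proposition~\ref{introprop} then yields the absolute continuity of $\nu_\beta$.

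The main obstacle is not the iteration, which is a one-line telescoping estimate, but in pinning down the precise form of equidistribution that the argument actually demands. Weak-$*$ convergence of $\mu_n/\mathcal N_n$ to normalized Lebesgue is not enough, because $g$ is genuinely discontinuous: what is required is a summable quantitative bound on the discrepancy at the finitely many break points of $g$. Formulating this rate so that it is both sufficient for the telescoping argument above and, in principle, checkable via the cocycle / cut-and-project framework developed in the later sections is precisely the role of Theorem~\ref{weakconvwithrate} and the subsequent comments.
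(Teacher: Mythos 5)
Your argument is correct and is essentially the paper's own proof: the identity $\mathcal N_{n+1}=\int_I g\,d\mu_n$ (Lemma \ref{MassGrowth} plus $\mu_{n+1}=\Phi(\mu_n)$), the value $\frac{1}{|I|}\int_I g\,d\Leb=\frac{4}{\beta}$, and the telescoping product with a summable error term are exactly how Theorem \ref{weakconvwithrate} is proved, with your condition $\sum_n r_n<\infty$ matching the paper's hypothesis $\sum_n\log\bigl(\frac{\beta}{4}\frac{1}{|\mu_n|}\int g\,d\mu_n\bigr)\leq\log C$ before invoking Theorem \ref{CountingTheorem} (Corollary \ref{Bound implies abs c}). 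Your closing remark that weak-$*$ convergence alone does not suffice and a quantitative rate on the intervals of constancy of $g$ is needed is precisely the comment the paper makes after the theorem.
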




If one draws the points contributing to the count for $\mathcal N_n$, that is if one draws the set \[\left\{\sum_{i=1}^n (a_i-b_i)\beta^{n-i}:\mbox{ each }a_i, b_i\in\{0,1\}\right\}\cap I\] then no structure is apparent, although the set of points becomes increasingly dense as $n$ increases. Similarly, the measures $\mu_n$ do not seem to have any discernable structure when viewed in one dimension. 

If however, one includes a second coordinate using the other Galois conjugate larger than one in modulus, then one uncovers the highly structured set
\[
X_n=\left\{\sum_{i=1}^n \left((a_i-b_i)\beta^{n-i}, (a_i-b_i)\beta_2^{n-i}\right):\mbox{ each }a_i, b_i\in\{0,1\}\right\}\cap\left( I \times \mathbb R\right)
\]
We have plotted this set below for $n=6$.

\begin{figure}[htbp]
\centerline{\includegraphics[scale=.2]{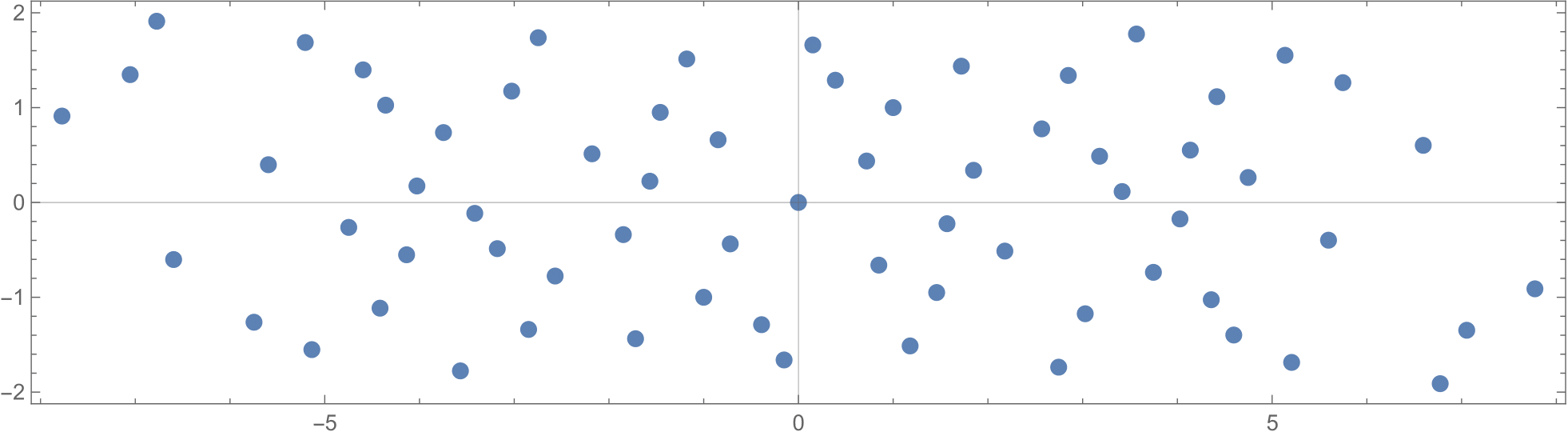}}
\caption{The set $X_6$ reflected across the diagonal.}
\label{fig}
\end{figure}

The measure $\mu_n$ lifts naturally to a measure on $X_n$. As $n$ grows, $X_n$ expands to fill the set
\[
X=\left\{\sum_{i=1}^n \left((a_i-b_i)\beta^{n-i}, (a_i-b_i)\beta_2^{n-i}\right):n\in\mathbb N, \mbox{ each }a_i, b_i\in\{0,1\}\right\}\cap\left(I\times\mathbb R\right)
\]
which is uniformly discrete and relatively dense in the strip $\left(I\times\mathbb R\right)$. In fact $X$ is a cut and project set where the cut and project scheme uses a window involving the Galois conjugates less than one in modulus, it can be constructed by a method similar to that of the Rauzy fractal \cite{ArnouxHarriss}. 

In order to estimate $\mathcal N_n$ we are left with two problems, firstly to work out which elements of $X$ are in $X_n$, and secondly to work out $\mu_n(x)$ for points $(x,y)\in X_n$. The first problem is easy, we use the $y$-coordinate 
$\sum_{i=1}^n (a_i-b_i)\beta_2^{n-i}$ as a proxy for the smallest $n$ for which $(x,y)\in X_n$, it is certainly true that
\[
X_n\subset \{(x,y)\in X : |y|\leq \sum_{i=1}^n \beta_2^{n-i}\}
\]
and this estimate is good enough for us. 

The second problem is much harder, and we rely heavily on our work \cite{Measures}. We use that there exists $\alpha>1$ such that, for each $(x,y)\in X$, $\mu(x):=\lim_{n\to\infty}\frac{1}{\alpha^n}\mu_n(x)$ exists. The key result of section \ref{Sec4} gives the following corollary, stated more precisely in Theorem \ref{moving to w}. 

\begin{prop}\label{P1.3}[Special Case of Theorem \ref{moving to w}]
Suppose that the sequence of measures
\[
\sum_{(x,y)\in X:y \in [-\sum_{i=1}^n \beta_2^{n-i},\sum_{i=1}^n \beta_2^{n-i}]} \mu(x) \delta_{x},
\]
once renormalised to have mass one, converges with certain rate to Lebesgue measure. Then the Bernoulli convolution $\nu_{\beta}$ is absolutely continuous. 
\end{prop}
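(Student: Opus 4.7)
\textbf{Proof plan for Proposition \ref{P1.3}.} The strategy is to deduce the hypothesis of Proposition \ref{P1.2} from the convergence stated here, and then to invoke that proposition to conclude absolute continuity. The bridge between the two statements is provided by the cut-and-project structure on $X$ together with the pointwise limit $\mu(x)=\lim_{n\to\infty}\alpha^{-n}\mu_n(x)$ coming from \cite{Measures}.

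First I would lift $\mu_n$ from $I$ to a measure on $X_n$: each $x\in I$ in the support of $\mu_n$ corresponds to a unique $(x,y)\in X_n$, because the $y$-coordinate is determined algebraically by the same choice of $(a_i-b_i)$ that produces $x$. The total mass of this lift is $\mathcal{N}_n$. Using the inclusion $X_n\subset\{(x,y)\in X : |y|\leq \sum_{i=1}^n \beta_2^{n-i}\}$, the lifted $\mu_n$ is supported inside the window appearing in the hypothesis. I would then invoke the quantitative form of the convergence $\alpha^{-n}\mu_n(x)\to\mu(x)$, the substantive input from \cite{Measures}, to replace $\mu_n(x)$ by $\alpha^n\mu(x)$ with a controlled error. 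This produces a direct comparison between the renormalised projection of the lifted $\mu_n$, which after projection to $I$ is $\mu_n/\mathcal{N}_n$, and the renormalised projection to $I$ of
\[
\sum_{(x,y)\in X,\ |y|\leq \sum_{i=1}^n \beta_2^{n-i}} \mu(x)\,\delta_{(x,y)}.
\]

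Finally, the hypothesis of the proposition says precisely that the projection of the above measure, once normalised, converges to Lebesgue measure on $I$ at a specified rate. If that rate dominates both (i) the error made in the approximation $\alpha^{-n}\mu_n(x)\approx\mu(x)$ and (ii) the error arising from points of $X$ lying inside the window $|y|\leq\sum_{i=1}^n \beta_2^{n-i}$ but not in $X_n$, then the measures $\mu_n/\mathcal{N}_n$ inherit the rate of equidistribution demanded by Theorem \ref{weakconvwithrate}, and Proposition \ref{P1.2} yields the absolute continuity of $\nu_\beta$. The main obstacle is the first of these error terms: extracting a sufficiently uniform-in-$x$ rate for $\alpha^{-n}\mu_n(x)\to\mu(x)$ from \cite{Measures}, and tracking how this error interacts with the specific test functions used to quantify equidistribution in Theorem \ref{weakconvwithrate}, is where most of the genuine work lies. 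The second error term is comparatively mild: points of $X$ inside the window carry $\mu$-mass bounded by values of $\mu$ on the window boundary, and the discrepancy between $X_n$ and the full window shrinks in a controlled way because $|\beta_2|>1$.
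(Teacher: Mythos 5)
Your plan hinges on replacing $\mu_n(x)$ by $\alpha^n\mu(x)$ ``with a controlled error'', uniformly enough in $x$ to push the hypothesised rate of equidistribution of the window measures onto $\mu_n/\mathcal N_n$ and so verify the hypothesis of Proposition \ref{P1.2}. That step is a genuine gap, not a technicality: Theorem \ref{CollectedThm} (and the results of \cite{Measures} behind it) gives only the pointwise convergence $\frac{1}{\lambda^n}\bar{\mu}_n(x)\to f(x)$ together with $0<f(x)\le f(0)$; no rate, and no uniformity in $x$, is available. Worse, routing through Theorem \ref{weakconvwithrate} forces you to compare the normalised integrals $\frac{1}{|\mu_n|}\int g\,d\mu_n$ with the corresponding normalised integrals against the window measure, which requires two-sided control (an upper bound on $\int g\,d\mu_n$ \emph{and} a lower bound on $|\mu_n|=\mathcal N_n$ in terms of the window quantities). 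A pointwise limit without rate cannot supply either, and you yourself flag this as ``where most of the genuine work lies'' --- so as written the argument defers precisely the part that needs proving.

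The paper's proof of Theorem \ref{moving to w} avoids this issue entirely, and the contrast is instructive. Since the target (via Corollary \ref{Bound implies abs c}) is only an \emph{upper} bound on $\mathcal N_n=|\bar{\Phi}^n(\delta_0)|$, a one-sided domination suffices: from $\bar{\mu}(0)\delta_0\le\bar{\mu}$, monotonicity of $\bar{\Phi}$ and the eigenmeasure equation $\bar{\Phi}(\bar{\mu})=\lambda\bar{\mu}$ one gets the exact inequality $|\bar{\Phi}^n(\delta_0)|\le\frac{\lambda^n}{\bar{\mu}(0)}\bar{\mu}(R_n)$ (Lemma \ref{m bounds mn}), with no approximation error at all. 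The eigen-equation is then used again to show $\bar{\mu}(R_{n+1})\le\frac{1+\epsilon_n}{\lambda}\int g\,d\pi_e(\bar{\mu}|_{R_n})$ (Lemma \ref{4.4}), where the additive boundary error, coming from the uniformly bounded number of points near the edge of $R_{n+1}$ whose $\bar{T}_i$-preimages may leave $R_n$, is absorbed multiplicatively because $\bar{\mu}(R_n)$ grows exponentially under the assumption $\lambda<4/|\beta_1\cdots\beta_d|$ (Lemma \ref{R_n big}) --- an assumption your sketch never invokes but which is essential to the statement. Telescoping these inequalities (Proposition \ref{equid on w bounds the errors}) converts the hypothesised rate directly into $\mathcal N_n\le C'\left(\frac{4}{|\beta_1\cdots\beta_d|}\right)^n$, bypassing any claim that $\mu_n/|\mu_n|$ itself equidistributes. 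Your error (ii) is loosely parallel to the $\epsilon_n$ of Lemma \ref{4.4}, but your error (i) is a missing ingredient: to repair the proposal you would either have to establish a uniform quantitative version of Theorem \ref{CollectedThm}, or drop the two-sided comparison and argue one-sidedly through the operator $\bar{\Phi}$ as the paper does.
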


The convergence to the Lebesgue measure of sequence of measures above is consistent with numerical evidence. Table \ref{tab:table1} shows the Wasserstein distance of 

\[
\sum_{(x,y)\in X:y \in [-n/(\beta-1),n/(\beta-1)]} \mu(x) \delta_{x},
\]

once normalised to have mass one, to the Lebesgue measure for $n=1,...,20$.

\begin{table}[ht]
  \begin{center}\small

    \begin{tabular}{r|c} 
      \text{n} & \text{$W_1(\cdot,\operatorname{Leb})$} \\
     
      \hline
   
1&0.0257383\\
2&0.0154008\\
3&0.0079060\\
4&0.0068856\\
5&0.0065858\\
6&0.0048812\\
7&0.0038639\\
8&0.0053756\\
9&0.0047376\\
10&0.0049352\\
11&0.0040242\\
12&0.0054624\\
13&0.0030473\\
14&0.0033527\\
15&0.0021562\\
16&0.0028536\\
17&0.0021284\\
18&0.0031695\\
19&0.0018788\\
20&0.0016524\\

    \end{tabular}\caption{Evidence for an equidistribution property of the measures $\mu_n$. }\label{tab:table1}
  \end{center}
\end{table}

One can study the support of the sequence of measures defined in Proposition \ref{P1.3} using domain exchange transformations, in much the same way that one studies greedy $\beta$ expansions using the Rauzy fractal. We also proved in \cite{Measures} that one can study the measures (rather than just the support) using a cocycle over this domain exchange transformation. This yields Theorem \ref{FinThm} which gives a condition for the absolute continuity of the Bernoulli convolution in terms of the ergodic theory of cocycles over domain exchange transformations.

\section{A First Condition for Absolute Continuity}\label{Sec2}
There has been a lot of progress in recent years in showing that certain Bernoulli convolutions have dimension one. For algebraic parameters this has based on understanding Garsia entropy, which counts the number of exact overlaps in the level $n$ approximations to the Bernoulli convolution. In this section we explain how good estimates in the total number of overlaps (including partial overlaps) in the level $n$ approximation to the Bernoulli convolution would allow one to understand absolute continuity.

Our starting point is the article \cite{CountingBeta} of the second author, in which two simple observations were made. The first is that if a self-similar measure $\nu$ is absolutely continuous, then the similarity equation which $\nu$ satisfies gives rise to a similarity equation for its density $h$. Furthermore, the measure $\nu$ is absolutely continuous if and only if there exists an $L^1$ function satisfying this density self-similarity equation. In the case of Bernoulli convolutions associated to a parameter $\beta\in(1,2)$ the statement becomes that the Bernoulli convolution is absolutely continuous if and only if there exists a non-negative $L^1$ function $h:\mathbb R\to\mathbb R$ such that
\[
h(x)=\frac{\beta}{2}(h(\beta x)+h(\beta x-1)).
\]
The second observation of \cite{CountingBeta} was that one can study the existence of solutions to such equations in terms of functions which count the number of codings of each point $x$ in the level n-construction of the self-similar measure. 

In this section we generalise both of these ideas to measures on self-affine carpets with contraction rates in different directions corresponding to Galois conjugates of $\beta$, these measures are higher dimensional generalisations of Bernoulli convolutions. We also convert the second observation described above into one involving counting the total number of overlaps in the self-affine construction. When the self-affine measures we study are projected onto their first coordinate they give rise to the Bernoulli convolution, and so absolute continuity of these self-affine measure implies the absolute continuity of the Bernoulli convolution.

\subsection{The Self-Affine Case}

Let $\beta\in(1,2)$ be a hyperbolic algebraic integer. 

We will be interested in diagonal self-affine sets with contraction parameters associated with all but one of the Galois conjugates of $\beta$ of absolute value larger than one. For this reason we number the Galois conjugates of $\beta$ in an unusual way, let $\beta$ have Galois conjugates $\beta=\beta_1,...,\beta_d,\beta_{d+1},\cdots,
\beta_{d+s},\beta_{d+s+1}$ where $|\beta_1|,...,|\beta_d|>1$, $|\beta_{d+1}|,...,|\beta_{d+s}|<1$ and $\beta_{d+s+1}\in\mathbb{R}\setminus[-1,1]$. 

In this section we will focus on $\beta_1,...,\beta_d$. For $z\in\mathbb C$ set $\mathbb F_z=\mathbb R$ when $z\in\mathbb R$ and $\mathbb F_z=\mathbb C$ when $z\in\mathbb C\setminus\mathbb R$. Further define
\begin{align*}
\mathbb{K}:=\prod_{i=1}^{d}\mathbb{F}_{\gb_i},
\end{align*}
For $i\in\mathbb N$ we define $T_i:\mathbb K\to\mathbb K$ by
\[T_i(x_1,...,x_{d})=(\beta_1x_1+i,...,\beta_{d}x_{d}+i).\]

For $j\in\{1,\cdots d\}$ let
\begin{align*}
I_{\beta_j}^+=
\begin{cases}
\left[0,\frac{1}{\beta_j-1}\right],\quad \beta_j\in(1,\infty)\\
\left\{x\in\mathbb{R}:|x|\in\left[0,\frac{1}{|\beta_j|-1}\right]\right\},\quad x\in(-\infty,-1)\\
\left\{z\in\mathbb{C}:|z|\in\left[0,\frac{1}{|\beta_j|-1}\right]\right\},\quad z\in\mathbb{C}\setminus\mathbb{R}
\end{cases}
\end{align*}
and 

\[
I^+=I^+_{\beta_1}\times \cdots \times I^+_{\beta_d}.
\]
Define the self-affine measure $\nu_{\underline{\beta}}$ on $\mathbb{K}$ by
\begin{equation}\label{SelfAffine}
\nu_{\underline{\beta}}=\frac{1}{2}\left(\nu_{\underline{\beta}}\circ T_0+\nu_{\underline{\beta}}\circ T_{-1}\right).
\end{equation}

Note that the maps $T_i$ are expanding, and $\nu_{\underline{\beta}}$ is the measure associated to contractions $T_0^{-1}, T_{-1}^{-1}$. This measure has support contained in $I^+$. If $\nu_{\underline{\beta}}$ is absolutely continuous then  $\nu_{\beta}$ is absolutely continuous, we aim to prove the absolute continuity of $\nu_{\underline{\beta}}$.




Define an operator $P$ on functions $f:\mathbb{K}\to\mathbb R$ by letting 
\[
Pf=\frac{|\beta_1\cdot...\cdot\beta_{d}|}{2}(f\circ T_0+f\circ T_{-1}).
\]

P preserves the space of non-negative functions that vanish outside $I^+$ and have integral one. $P$ is a linear operator, and in particular if $f$ is a fixed point of $P$ then $cf$ is also a fixed point of $P$ for any constant $c>0$, thus if P has a fixed point of positive finite integral then it has a fixed point of integral one.

\begin{prop}\label{densityprop}
Suppose that $P$ has a fixed point which has positive finite integral. Then the self-affine measure $\nu_{\underline{\beta}}$ is absolutely continuous and the fixed point of $P$ of integral one is the density of $\nu_{\underline{\beta}}$.
\end{prop}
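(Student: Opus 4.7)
The plan is to show the equivalence at the level of measures: any non-negative fixed point $h$ of $P$ with integral one defines a measure $d\mu = h\,dx$ which satisfies the self-affine equation \eqref{SelfAffine}, and by uniqueness of the self-similar measure this forces $\mu = \nu_{\underline{\beta}}$.

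First I would normalise. Since $P$ is linear and preserves non-negativity and the subspace of functions supported in $I^+$, if $h\ge 0$ is a fixed point with $0<\int h\,dx=c<\infty$ then $h/c$ is another non-negative fixed point with integral one and support in $I^+$. So I may assume from the outset that $h\ge 0$, $\int h\,dx=1$, and $h$ vanishes outside $I^+$. Define the Borel probability measure $\mu$ on $\mathbb K$ by $d\mu = h\,dx$.

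Next I would verify the self-similarity equation directly via change of variables. For any Borel set $A\subset\mathbb K$, the map $T_i$ has constant Jacobian of absolute value $|\beta_1\cdots\beta_d|$, so
\[
\mu(T_i(A)) \;=\; \int_{T_i(A)} h(x)\,dx \;=\; |\beta_1\cdots\beta_d|\int_A h(T_i(y))\,dy.
\]
Summing over $i\in\{0,-1\}$ with weight $\tfrac12$ gives
\[
\tfrac{1}{2}\bigl(\mu(T_0(A))+\mu(T_{-1}(A))\bigr) \;=\; \int_A \tfrac{|\beta_1\cdots\beta_d|}{2}\bigl(h(T_0 y)+h(T_{-1}y)\bigr)\,dy \;=\; \int_A Ph(y)\,dy \;=\; \int_A h(y)\,dy \;=\; \mu(A),
\]
using $Ph=h$ in the penultimate step. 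Thus $\mu$ satisfies the defining equation \eqref{SelfAffine}.

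Finally I would invoke uniqueness. The inverse maps $T_0^{-1},T_{-1}^{-1}$ form a contracting affine IFS on the compact set $I^+$ (each coordinate contracts by $1/|\beta_j|<1$), so by Hutchinson's theorem there is a unique Borel probability measure supported in $I^+$ satisfying \eqref{SelfAffine}, and that measure is by definition $\nu_{\underline{\beta}}$. Since $\mu$ is a probability measure supported in $I^+$ satisfying the same equation, $\mu=\nu_{\underline{\beta}}$. Hence $\nu_{\underline{\beta}}$ has density $h$ with respect to Lebesgue measure on $\mathbb K$ and is in particular absolutely continuous.

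There is no real obstacle here; the only things to be careful about are that $h$ is honestly non-negative (which is implicit in the statement via the subspace $P$ preserves), that the support stays inside $I^+$ so that Hutchinson's uniqueness applies, and that the Jacobian factor $|\beta_1\cdots\beta_d|$ appearing in the definition of $P$ is precisely the one demanded by the change of variables. The real content of the proposition is the reduction of absolute continuity to the existence problem for a fixed point of $P$; the verification itself is a direct calculation.
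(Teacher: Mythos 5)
Your proof is correct and follows essentially the same route as the paper: integrate the normalised fixed point to get a probability measure, verify the self-affinity equation \eqref{SelfAffine} by the change of variables with Jacobian $|\beta_1\cdots\beta_d|$, and conclude by uniqueness of the measure satisfying that equation. The only difference is cosmetic: you spell out the uniqueness step via Hutchinson's theorem for the contracting IFS $\{T_0^{-1},T_{-1}^{-1}\}$ on $I^+$, which the paper leaves implicit.
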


\begin{proof}
By integrating the fixed point $f$ of $P$ with integral one, we get a probability measure $\nu'$ on $I^+.$ In order to check that $\nu'=\nu_{\underline{\beta}}$ we need only check that $\nu'$ satisfies the self-affinity equation \ref{SelfAffine}, and so it is enough to check that for any $A\subset I^+$ we have
\[
\nu'(A)=\frac{1}{2}\left(\nu'(T_0(A))+\nu'(T_{-1}(A))\right).
\]
This then follows immediately from the equation $Pf=f$ using that
\begin{eqnarray*}
\nu'(A)&=&\int_A f(x_1,\cdots, x_{d})d(x_1,\cdots x_{d})\\
&=&\int_A Pf(x_1,\cdots, x_{d})d(x_1,\cdots x_{d})\\
&=& \frac{|\beta_1\cdot...\cdot\beta_{d}|}{2} \int_A f(T_0(x_1,\cdots, x_{d}))+f(T_{-1}(x_1,\cdots, x_{d})) d(x_1,\cdots, x_{d})\\
&=&\frac{1}{2}\left(\int_{T_0(A)}f(x_1,\cdots, x_{d})d(x_1,\cdots x_{d})+\int_{T_{-1}(A)}f(x_1,\cdots, x_{d})d(x_1,\cdots x_{d})\right)\\
&=&\frac{1}{2}\left(\nu'(T_0(A))+\nu'(T_{-1}(A))\right).
\end{eqnarray*}
\end{proof}

Our goal now is to construct $L^1$ functions which satisfy $Pf=f$. Let functions $f_n$ be given by
\[
f_n:=P^n(\chi_{I^+})
\]
Here $f_n(x_1,\cdots,x_{d})$ gives the number of words $a_1,\cdots,a_n\in\{0,-1\}^n$ for which $T_{a_n}\circ \cdots \circ T_{a_1}(x_1,\cdots, x_d)$ remains in the region $I^+$, multiplied by $\left(\frac{|\beta_1\cdot...\cdot\beta_{d}|}{2}\right)^n$. Equivalently, if we consider the iterated function system on $I^+$ with contractions $T_0^{-1}$, $T_1^{-1}$ then $f_n(x_1,\cdots x_{d})$ counts the number of words $a_1\cdots a_n$ for which $T_{a_1}^{-1}\circ \cdots \circ T_{a_n}^{-1}(I^+)$ covers $(x_1\cdots,x_{d})$, again multiplied by $\left(\frac{|\beta_1\cdot...\cdot\beta_{d}|}{2}\right)^n$.

Since the operator $P$ preserves integral, each $f_n$ has integral equal to the integral of $f_0$, which is the area of $I^+$.

\begin{lemma}\label{convlemma}
Suppose that there exists a uniform constant $C$ such that $||f_n||_2:=\int_{I^+} (f_n(x_1,\cdots x_{d}))^2 d(x_1,\cdots,x_{d})<C$ for all $n\in\mathbb N$. Then $P$ has a fixed point $h$ of integral one and with bounded $L^2$ norm.
\end{lemma}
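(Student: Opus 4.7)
My plan is to run a standard Cesàro averaging argument in the Hilbert space $L^2(I^+)$, exploiting the fact that $P$ is a bounded linear operator on this space and that $\{f_n\}$ is $L^2$-bounded by hypothesis.

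First I would verify that $P:L^2(I^+)\to L^2(I^+)$ is bounded. A change of variables $y=T_i(x)$, whose Jacobian has absolute value $|\beta_1\cdots\beta_d|$, yields
\[
\|f\circ T_i\|_2 = |\beta_1\cdots\beta_d|^{-1/2}\|f\|_2,
\]
so $\|P\|_{L^2\to L^2}\leq |\beta_1\cdots\beta_d|^{1/2}$; in particular $P$ is weakly continuous. As already noted in the text, $P$ also preserves non-negativity and the integral.

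Next I would form the Cesàro averages
\[
g_N:=\frac{1}{N}\sum_{n=0}^{N-1}f_n,
\]
each of which is non-negative, supported in $I^+$, has integral $\Leb(I^+)$, and satisfies $\|g_N\|_2\leq C^{1/2}$ by the triangle inequality (combined with the hypothesis $\|f_n\|_2^2<C$). Since $L^2(I^+)$ is reflexive, a subsequence $g_{N_k}$ converges weakly to some $h\in L^2(I^+)$. The limit $h$ is non-negative, because non-negativity is preserved under weak $L^2$-limits (test against $\chi_{\{h<0\}}$, which lies in $L^2(I^+)$ since $I^+$ has finite measure). For the same reason, $\chi_{I^+}\in L^2(I^+)$, and weak convergence gives $\int h=\lim_k \int g_{N_k}=\Leb(I^+)>0$.

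Finally, I would use the telescoping identity
\[
Pg_N-g_N=\frac{1}{N}\sum_{n=0}^{N-1}(f_{n+1}-f_n)=\frac{1}{N}(f_N-f_0),
\]
from which $\|Pg_N-g_N\|_2\leq 2C^{1/2}/N\to 0$ strongly. Because $P$ is weakly continuous, $Pg_{N_k}\rightharpoonup Ph$; combining with the strong (hence weak) convergence of $Pg_N-g_N$ to $0$ forces $Ph=h$. Normalising by $1/\Leb(I^+)$ produces a fixed point of integral one whose $L^2$ norm is at most $C^{1/2}/\Leb(I^+)$. There is no serious obstacle here; the whole argument is soft functional analysis once one observes the $L^2$-boundedness of $P$. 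The only place requiring modest care is the change-of-variables computation when some factors $\mathbb F_{\beta_j}$ are complex rather than real, but in both cases the Jacobian formula is standard.
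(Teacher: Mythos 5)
Your proof is correct and is essentially the paper's own argument: form Cesàro averages of the $f_n$, use weak compactness of bounded sets in $L^2(I^+)$, the telescoping identity $Pg_N-g_N=\frac{1}{N}(f_N-f_0)$, preservation of positivity and of the integral $\Leb(I^+)$, and normalise the weak limit. The only divergence is in the final step: the paper invokes the Banach--Saks theorem to upgrade to strong convergence of further averages before deducing $Pg=g$, whereas you deduce $Ph=h$ directly from weak continuity of $P$ (backed by your explicit $L^2$-boundedness estimate, a fact the paper uses only implicitly), which is a slightly cleaner finish; your remark about the complex-coordinate Jacobian is harmless since only boundedness of $P$, not the exact constant, is needed.
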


\begin{proof}
Define
\[
g_n(x_1,\cdots,x_{d}):=\frac{1}{n}\sum_{k=1}^n f_k(x_1,\cdots,x_{d}).
\]
then each $g_n$ also has $||g_n||_2<C$ so, since balls are weakly compact in Hilbert spaces, there is a subsequence of $g_n$ that converges weakly to some $g\in L^2(I^+)$ with $||g||_2\leq C$. Hence  by the Banach-Saks theorem there is a subsequence $g_{n_\gk}$ of $g_n$ such that

\begin{align*}
\left|\left|g-\frac{1}{n}\sum_{\gk=1}^ng_{n_\gk}\right|\right|_2\rightarrow0.
\end{align*}

Furthermore 
\[
||g_\gk-P(g_\gk)||_2=\frac{1}{\gk}||f_1-f_{\gk+1}||_2<\frac{2C}{\gk}
\]

so

\begin{align*}
    \left|\left|\frac{1}{n}\sum_{\gk=1}^ng_{n_\gk}-P\left(\frac{1}{n}\sum_{\gk=1}^ng_{n_\gk}\right)\right|\right|_2&=
    \left|\left|\frac{1}{n}\sum_{\gk=1}^ng_{n_\gk}-\frac{1}{n}\sum_{\gk=1}^nP(g_{n_\gk})\right|\right|_2\\&
    \leq\frac{1}{n}\sum_{\gk=1}^n||g_{n_\gk}-P(g_{n_\gk})||_2\\&
    \leq\frac{1}{n}\sum_{\gk=1}^n\frac{2C}{n_\gk}.
\end{align*}

Letting $n$ go to infinity in the inequality above we get $||g-P(g)||_2=0$ and so $g$ is a fixed point of $P$. Finally, since $g$ is the limit of a sequence of functions of fixed positive finite integral and $||g||_2\leq C$ we conclude that $g$ has positive finite integral, and so we can normalise it to give a function $h$ of integral $1$.

\end{proof}

We now explain how to bound $||f_n||_2$ in terms of the total number of overlaps at level $n$ of the iterated function system $\{T_0^{-1}, T_{-1}^{-1}\}$. Let
\[
\mathcal N_n:=\#\left\{a_1\cdots a_n, b_1\cdots b_n\in\{0,-1\}^{2n}:T_{a_1}^{-1}\circ\cdots T_{a_n}^{-1}(I^+)\cap T_{b_1}^{-1}\circ\cdots T_{b_n}^{-1}(I^+)\neq\varnothing\right\}.  
\]
The question of whether these contracted regions overlap for given $a_1,\cdots,a_n, b_1,\cdots,b_n$ can be phrased in terms of the forward image of the origin $\underline 0$. 

This gives
\begin{eqnarray*}
\mathcal N_n&=& \#\large\{a_1\cdots a_n, b_1\cdots b_n\in\{0,-1\}^{2n} :\left| T_{a_1}\circ \cdots \circ T_{a_n}(\underline 0)-T_{b_1}\circ \cdots \circ T_{b_n}(\underline 0)\right| \\
&\in & I_{\beta_1}\times...\times I_{\beta_d}\large\}\\
&=& \#\large\{a_1\cdots a_n, b_1\cdots b_n\in\{0,1\}^{2n} : \left|\sum_{i=1}^n (a_i-b_i)\beta_j^{n-i}\right|\in I_{\beta_j} \text{ for each } \\
& &j\in\{1,\cdots,d\}\large\}.
\end{eqnarray*}

where 

\begin{align*}
I_{\beta_j}=
\begin{cases}
\left[\frac{-1}{\beta_j-1},\frac{1}{\beta_j-1}\right],\quad \beta_j\in(1,\infty)\\
\left\{x\in\mathbb{R}:|x|\in\left[0,\frac{2}{|\beta_j|-1}\right]\right\},\quad x\in(-\infty,-1)\\
\left\{z\in\mathbb{C}:|z|\in\left[0,\frac{2}{|\beta_j|-1}\right]\right\},\quad z\in\mathbb{C}\setminus\mathbb{R}
\end{cases}
\end{align*}

for $\{1,\cdots,d\}$.

\begin{prop}\label{estimateprop}
We have \[||f_n||_2\leq \lambda(I^+) \left(\frac{|\gb_1\cdot...\cdot\gb_{d}|}{4}\right)^n\mathcal N_n\]
\end{prop}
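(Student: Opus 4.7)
The plan is to compute $\|f_n\|_2 := \int_{I^+} f_n^2\,d\lambda$ directly, by squaring the explicit formula for $f_n$ that comes from unfolding $P^n(\chi_{I^+})$. First I would iterate the definition of $P$ to obtain
\[
f_n(x) = \left(\frac{|\beta_1\cdots\beta_d|}{2}\right)^n \sum_{\underline a \in \{0,-1\}^n} \chi_{I^+}(T_{a_n}\circ\cdots\circ T_{a_1}(x)),
\]
so that $f_n$ is a scalar multiple of the counting function that sits behind the definition of $\mathcal N_n$, with the scalar equal to the prefactor above.

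Next I would square this and integrate term by term, producing a double sum indexed by pairs of words $\underline a,\underline b\in\{0,-1\}^n$:
\[
\|f_n\|_2 = \left(\frac{|\beta_1\cdots\beta_d|}{2}\right)^{2n} \sum_{\underline a,\underline b}\lambda\bigl((T_{a_1}^{-1}\circ\cdots\circ T_{a_n}^{-1})(I^+)\cap (T_{b_1}^{-1}\circ\cdots\circ T_{b_n}^{-1})(I^+)\bigr).
\]
Each summand is either zero or is bounded above by the measure of either of the two cylinders making up the intersection. Since the map $T_{a_1}^{-1}\circ\cdots\circ T_{a_n}^{-1}$ scales $\lambda$ on $\mathbb K$ by the factor $|\beta_1\cdots\beta_d|^{-n}$ (the same Jacobian that forces $P$ to preserve integrals, as used in Proposition \ref{densityprop}), every such cylinder has measure $\lambda(I^+)/|\beta_1\cdots\beta_d|^n$.

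The final ingredient is that, by the definition of $\mathcal N_n$, at most $\mathcal N_n$ pairs $(\underline a,\underline b)$ give a nonempty intersection and hence a nonzero summand. Combining this with the per-cylinder volume bound shows that the double sum is at most $\mathcal N_n\cdot\lambda(I^+)/|\beta_1\cdots\beta_d|^n$; multiplying by the prefactor $(|\beta_1\cdots\beta_d|/2)^{2n}$ collapses neatly to the stated bound $\lambda(I^+)(|\beta_1\cdots\beta_d|/4)^n\mathcal N_n$. The whole argument is a one-shot unfolding of $P^n$ followed by a pigeonhole-style volume estimate, so I do not expect any serious obstacle. The only point needing a moment of care is that the Jacobian identity $|\det DT_i|=|\beta_1\cdots\beta_d|$ must be read in a manner consistent with the paper's normalisation of $\lambda$ on $\mathbb K$, especially in the directions where $\beta_j\in\mathbb C\setminus\mathbb R$; this is the same consistency that was already invoked earlier in order for $P$ to preserve integrals, so no new work is required.
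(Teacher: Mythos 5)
Your argument is correct and follows essentially the same route as the paper: unfold $P^n(\chi_{I^+})$, expand $\|f_n\|_2$ into a double sum over pairs of words, discard the pairs with empty intersection (exactly those not counted by $\mathcal N_n$), and bound each remaining term by the cylinder volume $\lambda(I^+)|\beta_1\cdots\beta_d|^{-n}$. The normalisation caveat you flag about the Jacobian in complex directions is the same convention the paper itself relies on, so nothing further is needed.
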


\begin{proof}
Notice that 
\begin{align*}
P^nf=\left(\frac{|\gb_1\cdot...\cdot\gb_{d}|}{2}\right)^n\sum_{a_1,...,a_n\in\{0,-1\}}f\circ T_{a_1}\circ...\circ T_{a_n}
\end{align*}
So we have 
\begin{align*}
||f_n||_2&=\int_{I^+}f_n(x)f_n(x)dx\\
&= \int_{I^+}\left(     \left(\frac{|\gb_1\cdot...\cdot\gb_{d}|}{2}\right)^n\sum_{a_1,...,a_n\in\{0,-1\}}\chi_{I^+}\circ T_{a_1}\circ...\circ T_{a_n}  \right)\\&
\left(\left(\frac{|\gb_1\cdot...\cdot\gb_{d}|}{2}\right)^n\sum_{b_1,...,b_n\in\{0,-1\}}\chi_{I^+}\circ T_{a_1}\circ...\circ T_{a_n}\right)dx\\
&=\int_{I^+}\left(\frac{|\gb_1\cdot...\cdot\gb_{d}|^2}{4}\right)^n\sum_{a_1,...a_n,b_1,...,b_n}\chi_{I^+}\circ T_{a_1}\circ...\circ T_{a_n}\cdot \chi_{I^+}\circ T_{b_1}\circ...\circ T_{b_n}dx\\
&=\left(\frac{|\gb_1\cdot...\cdot\gb_{d}|^2}{4}\right)^n\sum_{a_1,...a_n,b_1,...,b_n}\int_{I^+}\chi_{I^+}\circ T_{a_1}\circ...\circ T_{a_n}\cdot \chi_{I^+}\circ T_{b_1}\circ...\circ T_{b_n}dx
\end{align*}
Notice that in the bound for  $||f_n||_2$ given above we need to keep only the terms for $a_1,...,a_n,b_1,...,b_n$ such that $\chi_{I^+}\circ T_{a_1}\circ...\circ T_{a_n}\cdot \chi_{I^+}\circ T_{b_1}\circ...\circ T_{b_n}\neq0$, i.e. those $a_1,\cdots a_n,b_1\cdots,b_n$ involved in the definition of $\mathcal N_n$. Furthermore, by noticing that $\int_{I^+}\chi_{I^+}\circ T_{a_1}\circ...\circ T_{a_n}\cdot \chi_{I^+}\circ T_{b_1}\circ...\circ T_{b_n}dx$ is at most $\gl(I^+)|\gb_1\cdots\gb_{d}|^{-n}$, we end up with

\begin{align*}
||f_n||_2\leq \gl(I^+) \left(\frac{|\gb_1\cdot...\cdot\gb_{d}|}{4}\right)^n\mathcal N_n
\end{align*}
as required. 
\end{proof}

Combining Proposition \ref{densityprop}, Lemma \ref{convlemma} and Proposition \ref{estimateprop} gives the following theorem.

\begin{theorem}\label{CountingTheorem}
Suppose that the total number $\mathcal N_n$ of overlaps in the $n$th level of the iterated function system $T_0, T_1$ satisfies that 
\[
\mathcal N_n\leq C\left(\frac{4}{|\gb_1\cdot...\cdot\gb_{d}|}\right)^n
\]
for some constant $C>0$ and for each $n\in\mathbb N$. Then the corresponding self-affine measure $\nu_{\underline{\beta}}$ is absolutely continuous.
\end{theorem}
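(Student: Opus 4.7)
The plan is to chain Proposition \ref{densityprop}, Lemma \ref{convlemma} and Proposition \ref{estimateprop} together; with these three pieces already in place the theorem reduces to an arithmetic observation.

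First I would substitute the hypothesis $\mathcal N_n \leq C(4/|\beta_1\cdots\beta_d|)^n$ directly into the conclusion of Proposition \ref{estimateprop}. The two geometric factors cancel exactly, giving
\[
\|f_n\|_2 \;\leq\; \lambda(I^+)\left(\frac{|\beta_1\cdots\beta_d|}{4}\right)^{n}\cdot C\left(\frac{4}{|\beta_1\cdots\beta_d|}\right)^{n} \;=\; \lambda(I^+)\, C
\]
for every $n\in\mathbb N$, which is exactly the uniform bound hypothesised in Lemma \ref{convlemma}. (Recall that in the paper's convention $\|\cdot\|_2$ denotes the integral of the square, so this is a genuine uniform bound on all of the relevant $L^2$ norms.) Applying Lemma \ref{convlemma} then produces a non-negative function $h$ satisfying $Ph=h$, of integral one and of bounded $L^2(I^+)$ norm. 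Because $I^+$ has finite Lebesgue measure, Cauchy--Schwarz gives $h\in L^1(I^+)$, so in particular $h$ has positive finite integral.

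Feeding this $h$ into Proposition \ref{densityprop} identifies $h$ as the density of $\nu_{\underline\beta}$, and hence $\nu_{\underline\beta}$ is absolutely continuous, which is exactly the conclusion required. There is no substantive obstacle left to overcome: all of the work is already done in the three preceding results, and the numerical rate $4/|\beta_1\cdots\beta_d|$ in the hypothesis has been calibrated precisely so that the bound in Proposition \ref{estimateprop} becomes $n$-uniform. The only non-trivial design choices in the argument were made earlier — namely, bounding $\|f_n\|_2$ by the overlap count $\mathcal N_n$ rather than by a more delicate quantity, and using a Ces\`aro/Banach--Saks averaging to promote weak $L^2$-compactness of $\{f_n\}$ into an actual fixed point of $P$.
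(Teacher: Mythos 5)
Your proposal is correct and follows exactly the route the paper intends: the paper's proof of Theorem \ref{CountingTheorem} is precisely the chaining of Proposition \ref{estimateprop} (where the hypothesised bound on $\mathcal N_n$ cancels the factor $\left(|\gb_1\cdots\gb_d|/4\right)^n$ to give a uniform bound on $||f_n||_2$), Lemma \ref{convlemma}, and Proposition \ref{densityprop}. Nothing in your write-up deviates from or adds a gap to that argument.
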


We have stated Theorem \ref{CountingTheorem} for a measure rectangular self-affine set with contraction rates associated to $\beta_1,\cdots \beta_d$ which were all Galois conjugates, since this is how we will apply the result in later sections, but it is worth noting that assumptions on the contraction rates were not used in this section and the theorem holds for any set of contraction rates $\beta_1,\cdots,\beta_d$. 

\section{Measures on the distance set}\label{Sec3}
In this section we turn from counting the total number of overlaps at level $n$ of our construction to studying the distribution of the overlaps. We define a measure $\mu_n$ describing how the overlaps are distributed, and show that if $\mu_n$ converges sufficiently quickly to Lebesgue measure then the conditions of Theorem \ref{CountingTheorem} are met and so $\nu_{\beta}$ is absolutely continuous. In fact we show that if the integral of a particular step function $g$ with respect to $\mu_n$ converges to the integral of $g$ with respect to Lebesgue measure sufficiently quickly then $\nu_{\beta}$ is absolutely continuous, see Theorem \ref{weakconvwithrate} and the comments afterwards.

Theorem \ref{CountingTheorem} involves counting all pairs $a_1,\cdots, a_n, b_1, \cdots, b_n\in\{0,1\}^{2n}$ for which

\[\left|\sum_{i=1}^n (a_i-b_i)\beta_j^{n-i}\right|\in I_{\beta_j} \mbox{ for each }j\in\{1,\cdots,d\}\]

If we let $\underline{\beta}:=(\beta_1,\cdots,\beta_{d})$, $\underline{\beta^n}:=(\beta_1^n, \cdots, \beta_{d}^n)$, and 
\[
I=I_{\beta_1}\times...\times I_{\beta_d}
\]
we are counting the number of pairs $a_1,\cdots, a_n, b_1,\cdots, b_n$ for which
\[
\sum_{i=1}^n (a_i-b_i)\underline \beta^{n-i}\in I.
\]
Let $\mathcal D_n\subset\{0,1\}^{2n}$ be the set of such pairs $a_1,\cdots, a_n, b_1, \cdots, b_n$. It is useful for us to put a measure on the set of such differences. Let
\[
\mu_n:=\sum_{\left\{a_1\cdots a_n, b_1\cdots b_n \in \mathcal D_n\right\}} \delta_{\sum_{i=1}^n (a_i-b_i)\underline{\beta^{n-i}}},
\]
for $n\geq 1$. This is a sum of weighted Dirac masses, supported on the set $I$,  with total mass $\mathcal N_n$. 

In going from $\mathcal N_n$ to $\mathcal N_{n+1}$ it is useful to note that
\[
\sum_{i=1}^{n+1} (a_i-b_i)\beta_j^{(n+1)-i}=\beta_j\left(\sum_{i=1}^n (a_i-b_i)\beta_j^{n-i}\right)+(a_{n+1}-b_{n+1}),
\] 
with the difference $(a_{n+1}-b_{n+1})$ taking value $1,-1,$ or $0$. There are two different ways of getting value $0$ here, we can have $a_{n+1}=b_{n+1}=0$ or $a_{n+1}=b_{n+1}=1$.

Define an operator $\Phi$ on the space of measures on $I$ by letting
\[
(\Phi(\mu))(A):=\mu\left(T_1^{-1}(A)\right)+\mu\left(T_{-1}^{-1}(A)\right)+2\mu\left(T_0^{-1}(A)\right).
\]
for $A\subset I$. Note that we only define $\Phi$ on measures supported on $I$ and define $\Phi(\mu)$ to also be supported on $I$, we do not spread mass outside of $I$.

If we set $\mu_0=\delta_{\underline 0}$ then 
\[
\mu_{n}=\Phi(\mu_{n-1})
\]
for $n\in\mathbb N$. Let $|\mu|:=\mu(I)$ denote the total mass of a measure $\mu$ supported on $I$.  Phrased in this new language, Theorem \ref{CountingTheorem} yields the following corollary. 
\begin{cor}\label{Bound implies abs c}
Suppose that there exists a constant $C>0$ such that
\[
|\Phi^n(\delta_{\underline 0})|\leq C\left(\frac{4}{|\gb_1\cdot...\cdot\gb_{d}|}\right)^n
\]
for all $n\in\mathbb N$. Then the self-affine measure $\nu_{\underline{\beta}}$ is absolutely continuous.
\end{cor}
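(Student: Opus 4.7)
The plan is to identify $|\Phi^n(\delta_{\underline 0})|$ with $\mathcal N_n$ and then invoke Theorem \ref{CountingTheorem} directly. First I would verify by induction on $n$ that $\mu_n = \Phi^n(\delta_{\underline 0})$: given any atom $\delta_y$ of $\mu_{n-1}$ sitting at a partial difference $y = \sum_{i=1}^{n-1}(a_i-b_i)\underline{\beta}^{n-1-i}$, the definition of $\Phi$ places Dirac masses at $\underline{\beta} y + 1$ and $\underline{\beta} y - 1$ together with a double mass at $\underline{\beta} y$, which matches the three possible values of $a_n - b_n \in \{1,-1,0\}$ with multiplicity (the value $0$ arising from both $a_n = b_n = 0$ and $a_n = b_n = 1$). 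Restriction to $I$ at each step is built into $\Phi$ and matches the restriction in the definition of $\mu_n$.

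The remaining ingredient is the equality $|\mu_n| = \mathcal N_n$. By the inductive description above, $|\mu_n|$ counts those pairs $(a_1\cdots a_n, b_1\cdots b_n)$ for which \emph{every} partial sum $y_k := \sum_{i=1}^k (a_i-b_i)\underline{\beta}^{k-i}$ lies in $I$ (since mass escaping $I$ is discarded at each step), whereas $\mathcal N_n$ counts pairs for which only the final sum $y_n$ lies in $I$. Thus $|\mu_n| \leq \mathcal N_n$ is immediate, and the reverse inequality reduces to a \emph{trap} property: once the trajectory $y_k$ exits $I$ coordinate-wise, it cannot return. Concretely, if $y_k^{(j)} \notin I_{\beta_j}$ then since $|\beta_j|>1$,
\[
|y_{k+1}^{(j)}| = |\beta_j y_k^{(j)} + (a_{k+1}-b_{k+1})| \geq |\beta_j||y_k^{(j)}| - 1,
\]
and a one-line check in each of the three cases in the definition of $I_{\beta_j}$ (real positive with radius $1/(\beta_j-1)$, and real negative or complex with radius $2/(|\beta_j|-1)$) shows that this lower bound again exceeds the radius of $I_{\beta_j}$. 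Iterating the coordinatewise bound yields $y_n \notin I$, so any pair counted by $\mathcal N_n$ has all of its partial sums in $I$ and is therefore counted by $|\mu_n|$ as well.

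Combining these two steps, the hypothesis of the corollary becomes $\mathcal N_n = |\Phi^n(\delta_{\underline 0})| \leq C(4/|\beta_1\cdots\beta_d|)^n$, and Theorem \ref{CountingTheorem} delivers absolute continuity of $\nu_{\underline\beta}$. The only place requiring any care is the trap property, since the three cases defining $I_{\beta_j}$ have different radii; in each case, however, the inequality $|\beta_j|>1$ is exactly what is needed for the set $I_{\beta_j}$ to be forward-invariant under the excursion dynamics, so the argument is essentially a routine verification.
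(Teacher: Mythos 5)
Your proposal is correct and takes essentially the same route as the paper, which simply identifies $|\Phi^n(\delta_{\underline 0})|$ with $\mathcal N_n$ (via the unproved assertion $\mu_n=\Phi(\mu_{n-1})$) and then applies Theorem \ref{CountingTheorem}. The only difference is that you explicitly verify the coordinatewise trap property guaranteeing that a pair whose final sum lies in $I$ has all its partial sums in $I$ --- the detail the paper leaves implicit --- and your case check of the three definitions of $I_{\beta_j}$ is correct, since $|\beta_j|>1$ gives $|\beta_j|r-1>r$ for the relevant radius $r$ in each case.
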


We now turn to understanding how measures grow under the operator $\Phi$.
\begin{lemma}\label{MassGrowth}
\[|\Phi(\mu)|=\mu(T_1^{-1}(I))+\mu(T_{-1}^{-1}(I))+2\mu(T_0^{-1}(I)).\]
\end{lemma}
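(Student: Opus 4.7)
The statement is essentially a bookkeeping identity, so my plan is to derive it by unpacking the definitions and then to add a short sanity check that confirms the formula matches the combinatorial recursion for $\mu_n$.

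First I would recall two facts stated just above the lemma: (i) by convention $|\mu| := \mu(I)$ for any measure supported on $I$, and (ii) the operator $\Phi$ is defined only on measures supported on $I$ and produces a measure supported on $I$, via the formula
\[
(\Phi(\mu))(A) = \mu(T_1^{-1}(A)) + \mu(T_{-1}^{-1}(A)) + 2\mu(T_0^{-1}(A)) \quad \text{for } A\subset I.
\]
Setting $A = I$ in this formula gives the right-hand side of the lemma, while the left-hand side is $|\Phi(\mu)| = \Phi(\mu)(I)$ by (i). That is the entire proof.

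The only thing that needs comment is that the sets $T_j^{-1}(I)$ need not be contained in $I$; however, since $\mu$ is supported on $I$ we have $\mu(T_j^{-1}(I)) = \mu(T_j^{-1}(I)\cap I)$, so each of the three terms on the right-hand side is well-defined and depends only on the restriction of $\mu$ to $I$. This is exactly the mechanism by which mass is \emph{not} spread outside $I$: whenever $T_j$ would carry a point of the support of $\mu$ out of $I$, that point simply does not contribute to $\Phi(\mu)$.

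As a sanity check I would verify that the formula reproduces the expected behaviour of $\mu_n = \Phi(\mu_{n-1})$. A point $x = \sum_{i=1}^n (a_i-b_i)\underline{\beta}^{n-i}\in I$ in the support of $\mu_n$ extends to $\beta x + (a_{n+1}-b_{n+1})$ at level $n+1$, and the new point lies in $I$ precisely when $x\in T_{a_{n+1}-b_{n+1}}^{-1}(I)$. Among the four choices $(a_{n+1},b_{n+1})\in\{0,1\}^2$ the difference $+1$ and $-1$ each occur once while $0$ occurs twice, producing the multiplicities $1,1,2$ in front of the three terms. Summing over $x$ recovers the identity. I expect no real obstacle here; the lemma is purely a matter of reading off $\Phi(\mu)(I)$ from the definition, and I would keep the written proof to a couple of lines.
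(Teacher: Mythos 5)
Your proof is correct and is exactly the paper's argument: the identity follows immediately by taking $A=I$ in the definition of $\Phi$ and using $|\Phi(\mu)|=\Phi(\mu)(I)$ (the paper's proof is the one-line remark that this is immediate from the definition). The extra remarks about support and the combinatorial sanity check are fine but not needed.
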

\begin{proof}
This is immediate from the definition of $\Phi$.
\end{proof}
Define a step function $g:I\to\mathbb R$ by 
\[
g(x) = \chi_I(T_1(x))+\chi_I(T_{-1}(x))+2\chi_I(T_0(x))
\]
Then the previous lemma just says that
\[
|\phi(\mu)|=\int gd\mu.
\]

We have the following theorem.
\begin{theorem}\label{weakconvwithrate}
Suppose that there exists a constant $C>1$ such that
\[
\sum_{n=1}^{\infty}\log \left(\frac{|\gb_1\cdot...\cdot\gb_{d}|}{4}\frac{1}{|\mu_n|}\int gd\mu_n\right)\leq\log(C).
\]
Then the self-affine measure $\nu_{\underline{\beta}}$ is absolutely continuous.
\end{theorem}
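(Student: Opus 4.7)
The plan is to recognise the quantity inside the logarithm as the successive growth ratio $|\mu_{n+1}|/|\mu_n|$ and then telescope, in order to reduce the statement directly to Corollary \ref{Bound implies abs c}.

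First I would observe that Lemma \ref{MassGrowth}, combined with the definition of $g$ and the remark that $|\Phi(\mu)|=\int g\,d\mu$, yields
\[
|\mu_{n+1}| = |\Phi(\mu_n)| = \int g\, d\mu_n,
\]
so that
\[
\frac{|\gb_1\cdot\ldots\cdot\gb_d|}{4}\cdot\frac{1}{|\mu_n|}\int g\,d\mu_n
= \frac{|\gb_1\cdot\ldots\cdot\gb_d|}{4}\cdot\frac{|\mu_{n+1}|}{|\mu_n|}.
\]
Thus the hypothesis rewrites as
\[
\sum_{n=1}^{N}\log\!\left(\frac{|\gb_1\cdot\ldots\cdot\gb_d|}{4}\cdot\frac{|\mu_{n+1}|}{|\mu_n|}\right)\leq\log(C)
\]
for every $N$, since each term in the original sum is non-negative once one notes that $\frac{|\mu_{n+1}|}{|\mu_n|}$ is at least $\frac{4}{|\gb_1\cdots\gb_d|}$ (this sign fact is implicit in the statement; if needed it can be justified by comparing $\mathcal N_{n+1}$ to $4\mathcal N_n$ via the trivial bound that each pair extends in at most four ways, combined with the stated existence of absolutely continuous growth thresholds).

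The second step is the telescoping. Exponentiating and collecting terms gives
\[
\left(\frac{|\gb_1\cdot\ldots\cdot\gb_d|}{4}\right)^{N}\frac{|\mu_{N+1}|}{|\mu_1|}\leq C,
\]
that is,
\[
|\mu_{N+1}|\leq C\,|\mu_1|\left(\frac{4}{|\gb_1\cdot\ldots\cdot\gb_d|}\right)^{N}.
\]
Since $|\mu_1|$ is a fixed finite constant (depending only on the initial measure $\delta_{\underline{0}}$ and one application of $\Phi$), absorbing it and $C$ into a single constant $C'$ yields
\[
|\Phi^{n}(\delta_{\underline 0})| = |\mu_n| \leq C'\left(\frac{4}{|\gb_1\cdot\ldots\cdot\gb_d|}\right)^{n}
\]
for every $n\in\mathbb N$.

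Finally I would invoke Corollary \ref{Bound implies abs c}, which says that precisely this growth bound implies the absolute continuity of $\nu_{\underline{\beta}}$. There is no serious obstacle in this argument; the content of the theorem is really the interpretation of the sum-of-logs hypothesis as a telescoped bound on the total mass growth, and everything non-trivial was already established in Section \ref{Sec2}. The only mild subtlety to record carefully is the non-negativity of the logged ratios (equivalently, that $\mathcal N_n$ always grows at rate at least $4/|\gb_1\cdots\gb_d|$ per step on average), without which the bound from the hypothesis would only control a subsequence rather than the full sequence.
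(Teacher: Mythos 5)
Your argument is essentially the paper's own proof: identify $\frac{1}{|\mu_n|}\int g\,d\mu_n$ with $\frac{|\mu_{n+1}|}{|\mu_n|}$ via Lemma \ref{MassGrowth}, telescope the logarithms to get $|\mu_n|\leq C'\left(\frac{4}{|\gb_1\cdots\gb_d|}\right)^n$, and invoke Corollary \ref{Bound implies abs c}. The one defect is your side remark on non-negativity of the logged terms: the ``trivial bound'' you cite gives $\mathcal N_{n+1}\leq 4\mathcal N_n$, i.e.\ an \emph{upper} bound on $\frac{|\mu_{n+1}|}{|\mu_n|}$, so it cannot justify the claimed lower bound $\frac{|\mu_{n+1}|}{|\mu_n|}\geq \frac{4}{|\gb_1\cdots\gb_d|}$, and the appeal to ``growth thresholds'' is not an argument. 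Fortunately that claim is dispensable: the hypothesis, read as a convergent series (or one diverging to $-\infty$), forces the partial sums $\sum_{n=1}^{N}\log\left(\frac{|\gb_1\cdots\gb_d|}{4}\frac{|\mu_{n+1}|}{|\mu_n|}\right)$ to be bounded above by some constant uniformly in $N$, which is all the telescoping requires (the paper makes the same implicit step); so your proof stands once that remark is replaced by this observation.
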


Note that $\frac{1}{|\mu_n|}\int gd\mu_n$ is the integral of $g$ with respect to the probability measure $\frac{1}{|\mu_n|}{\mu_n}$. Secondly, if $\mathcal L$ denotes Lebesgue measure on $I$, normalised to have mass one, then $\int_I g(x)d\mathcal L(x)=\frac{4}{|\gb_1\cdot...\cdot\gb_{d}|}$. Thus, if the sequence of probability measures $\frac{\mu_n}{|\mu_n|}$ converge weakly to normalised Lebesgue measure $\mathcal L$ then 
\[
\log \left(\frac{|\gb_1\cdot...\cdot\gb_{d}|}{4}\frac{1}{|\mu_n|}\int gd\mu_n\right)\to 0.
\]
Thus the condition in Theorem \ref{weakconvwithrate} would follow from the sequence $\frac{\mu_n}{|\mu_n|}$ converging weakly to $\mathcal L$ with a given rate.

\begin{proof}
From Corollary \ref{Bound implies abs c} it is enough to prove that 
\begin{align*}
    \frac{1}{n}\log(|\mu_n|)\leq\frac{C}{n}+\log\left(\frac{4}{|\gb_1\cdot...\cdot\gb_{d}|}\right)
\end{align*}

for some $C>0$. From Lemma \ref{MassGrowth} and the discussion afterwards, for each positive integer $k$,
\begin{align*}
    \frac{|\mu_{k+1}|}{|\mu_{k}|}=\frac{|\Phi(\mu_k)|}{|\mu_k|}=\frac{1}{|\mu_k|}\int gd\mu_k.
\end{align*}

Then since $\log(|\mu_0|)=0$, we have
\begin{eqnarray*}
\log(|\mu_n|)&=&\sum_{k=0}^{n-1}\log\left(\frac{|\mu_{k+1}|}{|\mu_{k}|}\right)\\
&=& \sum_{k=0}^{n-1}\log\left(\frac{1}{|\mu_k|}\int gd\mu_k\right)\\
&=& \sum_{k=0}^{n-1}\log\left(\frac{4}{|\gb_1\cdot...\cdot\gb_{d}|}\right)+\sum_{k=0}^{n-1}\log \left(\frac{|\gb_1\cdot...\cdot\gb_{d}|}{4}\frac{1}{|\mu_k|}\int gd\mu_k\right)\\
&\leq & n\log \left(\frac{4}{|\gb_1\cdot...\cdot\gb_{d}|}\right)+\log(C)
\end{eqnarray*}
by the assumption in the theorem. Then
\[
\frac{1}{n}\log(|\mu_n|)\leq \log \left(\frac{4}{|\gb_1\cdot...\cdot\gb_{d}|}\right)+\frac{\log(C)}{ n}
\]
as required.

\end{proof}


\section{The limit measure $\bar{\mu}$}\label{Sec4}
In this section we link the measures $\mu_n$ with methods appeared in \cite{Measures}. The goal is to replace the measures $\mu_n$, which evolve in time, with a fixed limit measure $\bar{\mu}$ supported on a cut and project set. This allows us in section \ref{Sec5} to relate the absolute continuity of $\nu_{\beta}$ to the ergodic theory of cocycles over domain exchange transformations. 

First we need to move in a higher dimensional space by considering the rest of the Galois conjugates $\beta_{d+1},...,\beta_{d+s+1}$.  We set  $\bar{\beta}^n=(\gb_1^n,...,\gb_{d+s+1}^n)$. Set $\bar{T}_i(x_1,...,x_{d+s+1})=(\beta_1x_1+i,...,\beta_{d+s+1}x_{d+s+1}+i)$ which acts on the space $\bar{\mathbb{K}}:=\prod_{i=1}^{d+s+1}\mathbb{F}_{\gb_i}$. We also define the set 

\begin{align*}
\bar{Z}=\{a_{d+s}\bar{\gb}^{d+s}+...+a_0\bar{\gb}^0:a_{d+s},...,a_0\in\mathbb{Z}\}.
\end{align*}

 The set $\bar{Z}$ is a lattice in $\bar{\mathbb{K}}\cong\mathbb{R}^{\sum_{i=1}^{d+s+1}\text{dim}(\mathbb{F}_{\gb_i})}$. That is because $\{\bar{\beta}^0,...,\bar{\beta}^{d+s}\}$ is an independent subset of the real vector space  $\bar{\mathbb{K}}$. That can be checked using the formula for the determinant of the Vandermonde matrix. We partition our coordinates into expanding directions $1,\cdots,d$, contracting directions $d+1,\cdots,d+s$ and the free direction $d+s+1$. The dynamics we will introduce is also expanding on the free direction, but we deal with this coordinate separately since we will eventually project in this direction. 

We define projections $\pi_{e}, \pi_c$ and $\pi_{free}$ from $\bar{\mathbb K}$ onto subspaces of $\bar{\mathbb K}$ corresponding to expanding directions, contracting directions and the free direction respectively. They are given by
\begin{align*}
\pi_e(x_1,\cdots,x_{d+s+1})&=(x_1,\cdots,x_d)\\
\pi_c(x_1,\cdots,x_{d+s+1})&=(x_{d+1},\cdots,x_{d+s})\\
\pi_{free}(x_1,\cdots,x_{d+s+1})&=x_{d+s+1}.
\end{align*}

It is worth noting that $\pi_e,\pi_c$ and $\pi_{\free}$ are injective when restricted to $\bar{Z}$. We define a strip $S\subset \bar{\mathbb K}$ by
\[
S=\{(x_1,\cdots,x_{d+s+1})\in\bar{\mathbb K}:\pi_e(x_1,\cdots,x_{d+s+1})\in I\}.
\]
The following definitions differ from those in \cite{Measures} in that we restrict both $\bar{\mu}_n$ and $\bar{X}$ to the set $S$. Let the measure $\bar{\mu}_n$ on S be given by
 \begin{align*}
 \bar{\mu}_{n}(x)=&\#\left\{(a_1,...,a_{n},b_1,...,b_{n})\in\{0,1\}^{2n}:\sum_{i=1}^{n}(a_i-b_i)\bar{\gb}^{n-i}=x\right\}
 \end{align*}
for $x\subset S$. We do not give mass to points outside $S$. The measure $\bar{\mu}_n$ is a weighted sum of Dirac masses supported on the set
 \begin{align*}
 \bar{X}&:=\left\{\sum_{i=1}^{n}a_i\bar{\gb}^{n-i}:n\in\mathbb{N},a_1...,a_n\in\{-1,0,1\}\right\}\cap S
 \\&=\left\{\bar{T}_{a_n}\circ...\circ \bar{T}_{a_1}(0):n\in\mathbb{N},a_1...,a_n\in\{-1,0,1\}\right\}\cap S,
 \end{align*}

 Notice that for each $i\in\mathbb{Z}$ we have $\bar{T}_i(\bar{Z})\subseteq\bar{Z}$. In particular $\bar{X}\subseteq\bar{Z}$ so $\bar{X}$ 
 is uniformly discrete in $\bar{\mathbb K}$. Note that for $A\subset \bar{\mathbb K}$, $\mu_n\circ \pi_e(A)=\bar{\mu}_n(A)$ so the measures $\bar{\mu}_n$ are just lifts of the measures $\mu_n$ of the previous section to a higher dimensional space in which they are uniformly discrete.

 \begin{definition}
  Let $\mathcal R\subseteq I_{\beta_{d+1}}\times...\times I_{\beta_{d+s}}$ be the attractor of the iterated function system involving the maps $\overline T_i$ restricted to contracting coordinates $d+1,\cdots,d+s$. 
 \end{definition}
 
 The significance of the set $\mathcal{R}$ becomes clear in the condition \label{condition 1} below, although one can already observe that 
 
 \[\bar{X}\subseteq\{z\in\bar{Z}:  \pi_c(z)\in \mathcal R, \pi_e(z)\in I\}.
\]

We will need the following condition which can be checked in finite time (see \cite{Measures}) and which holds for all examples we have checked. 
 
 \begin{condition}\label{condition 1}
	$\bar{X}=\bar{Z}\cap\pi_c^{-1}(\operatorname{int}(\mathcal{R}))\cap S$,
\end{condition}

Below we have plotted on approximation of $\mathcal{R}$ for the example of section \ref{example}.

\begin{figure}[htbp]
\centerline{\includegraphics[scale=.3]{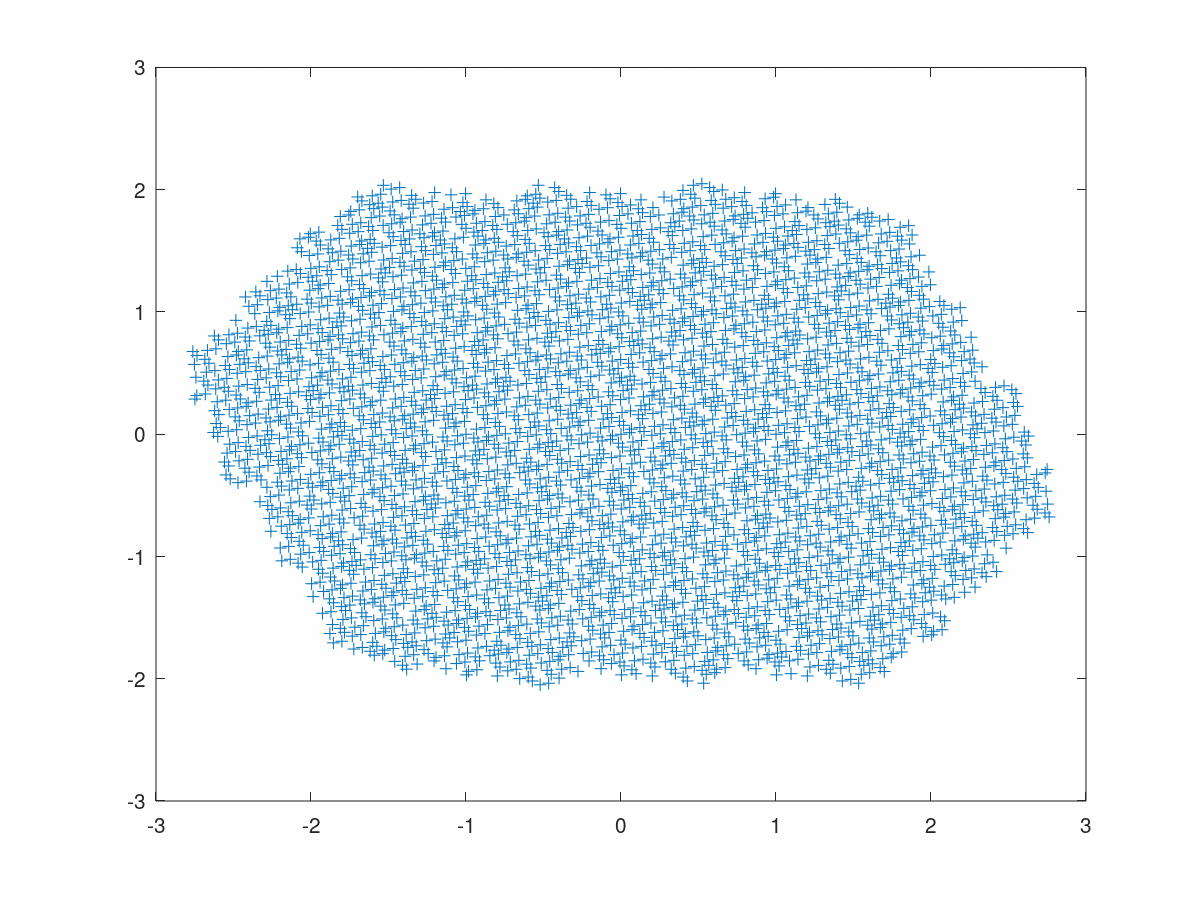}}
\caption{An approximation of $\mathcal{R}$ when   $\beta^4=\beta^3+\beta^2-\beta+1$.}
\label{fig}
\end{figure}

The following theorem recalls some results of \cite{Measures} that we will need.
\begin{theorem}\label{CollectedThm} \text{ }
\begin{enumerate}
\item There exists $\lambda>1$ and a function $f:\bar{X}\to(0,\infty)$ such that for each $x\in \bar{X}$ the sequence of real numbers $\frac{1}{\lambda^n}\bar{\mu}_n(x)$ converges to $f(x)$.
\item We have $0<f(x)\leq f(0)$ for each $x\in\bar{X}$.

\end{enumerate}
\end{theorem}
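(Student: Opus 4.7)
The plan is to identify $\lambda$ and $f$ via a Perron--Frobenius / renewal argument applied to the linear recursion satisfied by $\bar{\mu}_n$. The starting point is
\[
\bar{\mu}_{n+1}(x) = \bar{\mu}_n(\bar{T}_1^{-1}(x)) + \bar{\mu}_n(\bar{T}_{-1}^{-1}(x)) + 2\bar{\mu}_n(\bar{T}_0^{-1}(x)), \quad x\in\bar{X},
\]
with the convention that $\bar{\mu}_n$ vanishes off $\bar{X}$; this follows by grouping $(a,b)\in\{0,1\}^{2(n+1)}$ according to $a_{n+1}-b_{n+1}\in\{-1,0,1\}$, noting that the value $0$ arises twice (from $(0,0)$ and $(1,1)$), and using the identity $\sum_{i=1}^{n+1}(a_i-b_i)\bar{\beta}^{(n+1)-i}=\bar{\beta}\sum_{i=1}^n(a_i-b_i)\bar{\beta}^{n-i}+(a_{n+1}-b_{n+1})$. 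Writing $\mathcal{L}$ for the associated positive linear operator on functions $\bar{X}\to\mathbb{R}$, the recursion reads $\bar{\mu}_n=\mathcal{L}^n\delta_0$.

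I would then extract $\lambda$ as the exponential growth rate of $\bar{\mu}_n(0)$. Concatenating two codings of $0$ at scales $n$ and $m$ gives $\bar{\mu}_{n+m}(0)\geq \bar{\mu}_n(0)\bar{\mu}_m(0)$, so Fekete's lemma yields existence of $\lambda:=\lim \bar{\mu}_n(0)^{1/n}\in[2,4]$; the lower bound comes from the $2^n$ diagonal pairs $a=b$, and the upper bound from the trivial count $\bar{\mu}_n(0)\leq 4^n$. In particular $\lambda>1$.

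The main obstacle is upgrading single-point growth to pointwise convergence of $\bar{\mu}_n(x)/\lambda^n$ for every $x\in\bar{X}$ and identifying the limit as a Perron eigenfunction. Here the cut-and-project structure granted by Condition \ref{condition 1} is essential: each point of $\bar{X}$ is parametrised by $(\pi_e(x),\pi_c(x))\in I\times\operatorname{int}(\mathcal{R})$, and the maps $\bar{T}_i$ are strictly contracting in the $\pi_c$ directions. Consequently the action of $\mathcal{L}$ can be transported to an operator acting on a space of H\"older functions on $\mathcal{R}$, on which one expects quasi-compactness and a spectral gap. The leading eigenvalue is $\lambda$, the leading eigenfunction is $f$, and pointwise convergence $\mathcal{L}^n\delta_0/\lambda^n\to f$ then follows by separating the dominant spectral part from the rest. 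Making this rigorous in the setting of infinitely supported measures on a cut-and-project set is precisely the machinery developed in \cite{Measures}.

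Finally, part 2 splits cleanly. The upper bound $\bar{\mu}_n(x)\leq\bar{\mu}_n(0)$ follows from a Fourier argument on the lattice $\bar{Z}$: $\bar{\mu}_n$ is the convolution of the $n$ symmetric measures $\eta_k=\delta_{\bar{\beta}^{n-k}}+2\delta_0+\delta_{-\bar{\beta}^{n-k}}$, each of which has Fourier transform
\[
\widehat{\eta_k}(\xi)=2+2\cos(2\pi\langle\xi,\bar{\beta}^{n-k}\rangle)\geq 0,
\]
so $\widehat{\bar{\mu}_n}=\prod_k\widehat{\eta_k}\geq 0$, and inverting the Fourier transform on the dual torus gives $\bar{\mu}_n(x)=\int\widehat{\bar{\mu}_n}(\xi)e^{2\pi i\langle\xi,x\rangle}d\xi\leq\int\widehat{\bar{\mu}_n}=\bar{\mu}_n(0)$; dividing by $\lambda^n$ and passing to the limit gives $f(x)\leq f(0)$. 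Strict positivity $f(x)>0$ is a corollary of the explicit description of $f$ in terms of an ergodic-theoretic quantity over the contracting IFS on $\mathcal{R}$, as carried out in \cite{Measures}.
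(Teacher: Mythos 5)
You should first note how the paper itself handles this statement: Theorem \ref{CollectedThm} is explicitly a \emph{recollection} of results proved in the companion paper \cite{Measures}, and no proof is given here beyond the citation. Your proposal ends up in the same place for the two genuinely hard assertions --- the existence of the pointwise limit $f(x)=\lim_n\lambda^{-n}\bar{\mu}_n(x)$ for every $x\in\bar{X}$, and its strict positivity --- both of which you defer to \cite{Measures} after a heuristic paragraph. If your text is meant as a standalone proof, that paragraph is the gap: asserting that the action of your operator $\mathcal{L}$ ``can be transported to an operator on H\"older functions on $\mathcal{R}$, on which one expects quasi-compactness and a spectral gap'' is not an argument, and the transport itself is far from routine (the operator acts on functions on the infinite uniformly discrete set $\bar{X}$, the branch structure --- which of the preimages $\bar{T}_i^{-1}(x)$ lie in the strip $S$ --- depends on the expanding coordinates, $\delta_0$ is not a H\"older density, and $\bar{X}$ is unbounded in the free direction). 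You also invoke Condition \ref{condition 1}, which is not a hypothesis of this theorem; in the paper it is only needed later, for Proposition \ref{structure of w} and Theorem \ref{FinThm}. Finally, Fekete's lemma gives existence of the growth rate $\lim_n\bar{\mu}_n(0)^{1/n}$, but not convergence of $\lambda^{-n}\bar{\mu}_n(0)$, so it does not actually reduce the work in part 1.

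That said, two of your ingredients are correct and self-contained, and go beyond what this paper writes down. The recursion $\bar{\mu}_{n+1}(x)=\bar{\mu}_n(\bar{T}_1^{-1}(x))+\bar{\mu}_n(\bar{T}_{-1}^{-1}(x))+2\bar{\mu}_n(\bar{T}_0^{-1}(x))$ is right, and the restriction to $S$ causes no trouble because $\bar{T}_i^{-1}$ maps $I$ into $I$ in each expanding coordinate, so a pair whose final sum lies in $S$ has all partial sums in $S$; this identifies $\bar{\mu}_n$ on $S$ with the unrestricted convolution $\eta_1\ast\cdots\ast\eta_n$. Your positive-definiteness argument then correctly yields $\bar{\mu}_n(x)\le\bar{\mu}_n(0)$ for all $x\in\bar{X}$ (equivalently, write the unrestricted measure as $\nu_n\ast\tilde{\nu}_n$ with $\nu_n=\sum_a\delta_{\sum_i a_i\bar{\beta}^{n-i}}$ and apply Cauchy--Schwarz), which gives $f(x)\le f(0)$ once part 1 is known. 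So, modulo part 1, you have an honest elementary proof of the inequality in part 2; but part 1 and the positivity of $f$ --- precisely the content the paper imports from \cite{Measures} --- remain unproven in your write-up.
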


\begin{definition}
Define the measure $\bar{\mu}$ on $\bar{X}$ by $\bar{\mu}(A)=\sum_{x\in \bar{X}\cap A}f(x)$.
\end{definition}

As we did with the measures $\mu_n$ we define an operator $\bar{\Phi}$ acting on measures on $\bar{X}$ by 
  \begin{align*}
  \bar{\Phi}(\mu)(A)=\mu(\bar{T}_{-1}^{-1}(A))+2\mu(\bar{T}_{0}^{-1}(A))+\mu(\bar{T}_{-1}^{1}(A))
  \end{align*}
  for $A\subset S$, and $\bar{\Phi}(\mu)(A):=\bar{\Phi}(\mu)(A\cap S)$ for more general $A$. $\Phi$ does not spread mass outside of the strip $S$. We have 
  \begin{align*}
      \bar{\mu}_n=\bar{\Phi}^n\delta_0
  \end{align*}
  and
  \begin{align*}
  \bar{\mu}=\frac{1}{\lambda}\bar{\Phi}(\bar{\mu}),
  \end{align*}
  
see Lemma 4.3 of \cite{Measures}. 

We comment that the set $\bar{X}$ is bounded in the coordinates $1,\cdots, d$ since we insist on remaining in the strip $S$, and it is bounded in the coordinated $d+1,\cdots,d+s$ since the action of the maps $\bar{T}_i$ is contracting on these coordinates and orbits remain in the fractal $\mathcal R$. It is only the free direction $d+s+1$ in which $\bar{X}$ is unbounded. 

Let
\[
R_n=\{x\in\bar{X}: |\pi_{\free}(x)|\leq \sum_{i=0}^{n-1}|\beta_{d+s+1}^i| \}
\]

The rest of this section is dedicated to proving the following theorem, which replaces the $\mu_n$ of Theorem \ref{weakconvwithrate} with $\pi_e(\bar{\mu}|_{R_n})$.

\begin{theorem}\label{moving to w}
Suppose that $\lambda<4/|\beta_1\cdots \beta_d|$ and that there exists a constant $C$ such that

\[
\sum_{n=1}^{\infty}\log \left(\frac{|\gb_1\cdot...\cdot\gb_{d}|}{4}\frac{1}{|\bar{\mu}|_{R_n}|}\int gd \pi_e\bar{\mu}|_{R_n}\right)\leq\log(C).
\]

Then the self-affine measure $\nu_{\underline{\beta}}$ is absolutely continuous.
\end{theorem}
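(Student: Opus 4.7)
The plan is to deduce the hypothesis of Theorem \ref{weakconvwithrate} from the hypothesis at hand. Since $\pi_e$ is injective on $\bar X$, pushforwards under $\pi_e$ preserve total mass, so $|\mu_n|=|\bar{\mu}_n|$ and $\int g\,d\mu_n=|\Phi(\mu_n)|=|\bar{\Phi}(\bar{\mu}_n)|$, and analogously $|\bar{\mu}|_{R_n}|=|\pi_e\bar{\mu}|_{R_n}|$ and $\int g\,d\pi_e\bar{\mu}|_{R_n}=|\bar{\Phi}(\bar{\mu}|_{R_n})|$. Moreover $\bar{\mu}_n$ is automatically supported on $R_n$, since each point $\sum_{i=1}^n(a_i-b_i)\bar{\beta}^{n-i}$ has free coordinate of absolute value at most $\sum_{i=0}^{n-1}|\beta_{d+s+1}|^i$. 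So the task is to compare the evolving measures $\bar{\mu}_n$ on $R_n$ with the restrictions $\lambda^n\bar{\mu}|_{R_n}$ of the $\lambda$-scaled limit, both in total mass and after one application of $\bar{\Phi}$.

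Theorem \ref{CollectedThm}(1) supplies the pointwise convergence $\bar{\mu}_n(x)/\lambda^n\to f(x)$ for each $x\in\bar X$, and part (2) gives the uniform ceiling $f(x)\leq f(0)$. The plan is to promote these to quantitative estimates of the form
\[
|\mu_n|=\lambda^n|\bar{\mu}|_{R_n}|(1-\varepsilon_n),\qquad \int g\,d\mu_n=\lambda^n\int g\,d\pi_e\bar{\mu}|_{R_n}\cdot(1-\varepsilon_n'),
\]
where $\varepsilon_n,\varepsilon_n'$ capture the relative mass of points in $R_n$ (respectively $R_{n+1}$) at which $\bar{\mu}_n/\lambda^n$ has not yet settled near $f$. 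Taking logs and subtracting then shows that the summand in the present hypothesis differs from the summand in Theorem \ref{weakconvwithrate} by $\log((1-\varepsilon_n')/(1-\varepsilon_n))$. Provided this discrepancy is summable in $n$, the hypothesis translates directly to the hypothesis of Theorem \ref{weakconvwithrate}, which gives the absolute continuity of $\nu_{\underline{\beta}}$.

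The main obstacle is controlling the errors $\varepsilon_n,\varepsilon_n'$, since Theorem \ref{CollectedThm}(1) only supplies pointwise convergence. I would handle this by splitting $\bar X\cap R_n$ into a \emph{bulk} region $\bar X\cap R_{n-k_0}$, on which the sequence $\bar{\mu}_m(x)/\lambda^m$ has had at least $k_0$ iterations to approach $f(x)$, and a \emph{boundary shell} $\bar X\cap(R_n\setminus R_{n-k_0})$. The bound $f\leq f(0)$ crudely dominates the shell contribution, and one would need to extract from the convergence arguments in \cite{Measures} a quantitative rate on the bulk. Here the strict inequality $\lambda<4/|\beta_1\cdots\beta_d|$ plays the key role: it creates slack between the expansion rate $\lambda^n$ of $|\bar{\mu}_n|$ and the target rate $(4/|\beta_1\cdots\beta_d|)^n$, and it is precisely this slack, together with the free-direction geometry of $R_n$, that allows the boundary contribution to be absorbed into an $O(1)$ total summed error. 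Once the quantitative bounds are in place, the reduction to Theorem \ref{weakconvwithrate} completes the proof.
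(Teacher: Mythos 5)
Your reduction to Theorem \ref{weakconvwithrate} hinges on the two--sided quantitative comparison $|\mu_n|=\lambda^n\,\bar{\mu}(R_n)(1-\varepsilon_n)$ and $\int g\,d\mu_n=\lambda^n\int g\,d\pi_e(\bar{\mu}|_{R_n})\,(1-\varepsilon_n')$ with $\sum_n|\log((1-\varepsilon_n')/(1-\varepsilon_n))|<\infty$, and this is where the argument breaks. Theorem \ref{CollectedThm} gives only pointwise convergence of $\bar{\mu}_m(x)/\lambda^m$ to $f(x)$, with no rate and nothing uniform in $x$; no quantitative version is available in the paper, so your ``bulk'' estimate is an unproved input rather than a step. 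Worse, the bulk/shell split does not behave as you expect: the shell $R_n\setminus R_{n-k_0}$ has free--direction width $\sum_{i=n-k_0}^{n-1}|\beta_{d+s+1}|^i$, which is a fixed positive fraction of the width of $R_n$ (since $|\beta_{d+s+1}|>1$), so it contains a positive proportion of the points of $\bar X\cap R_n$ and, absent a uniform lower bound on $f$, plausibly a positive proportion of $\bar{\mu}(R_n)$. For points near the edge of $R_n$ (those only just reachable at time $n$) one has $\bar{\mu}_n(x)$ far below $\lambda^n f(x)$, so one expects $|\bar{\mu}_n|$ to undershoot $\lambda^n\bar{\mu}(R_n)$ by a factor bounded away from $1$; hence $\varepsilon_n\not\to 0$, let alone summably, and the dominating bound $f\le f(0)$ on the shell cannot rescue this. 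Note also that the lower bound on $|\mu_n|$ relative to $\lambda^n\bar{\mu}(R_n)$ is exactly the direction you need to pass from the hypothesis on $\bar{\mu}|_{R_n}$ to the hypothesis of Theorem \ref{weakconvwithrate}, so the gap sits on the critical path.

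The paper's proof avoids any such two--sided comparison and does not route through Theorem \ref{weakconvwithrate} at all. It uses only one--sided inequalities: monotonicity of $\bar{\Phi}$ together with $\bar{\Phi}(\bar{\mu})=\lambda\bar{\mu}$ gives $|\bar{\Phi}^n(\delta_0)|\le\frac{\lambda^n}{\bar{\mu}(0)}\bar{\mu}(R_n)$ (Lemma \ref{m bounds mn}); a one--step estimate $\bar{\mu}(R_{n+1})\le\frac{1+\epsilon_n}{\lambda}\int g\,d\pi_e(\bar{\mu}|_{R_n})$ (Lemma \ref{4.4}), where the exceptional set is a slab of free--direction width $2$ (hence boundedly many points, each of mass at most $\bar{\mu}(0)$), made relatively negligible because $\bar{\mu}(R_n)$ grows exponentially under $\lambda<4/|\gb_1\cdots\gb_d|$ (Lemma \ref{R_n big}); telescoping these yields $|\Phi^n(\delta_0)|\le c\,\frac{\bar{\mu}(R_0)}{\bar{\mu}(0)}\prod_{i=0}^{n-1}\frac{1}{\bar{\mu}(R_i)}\int g\,d\pi_e(\bar{\mu}|_{R_i})$, and the hypothesis then bounds $\mathcal N_n$ by $C'\bigl(4/|\gb_1\cdots\gb_d|\bigr)^n$, so Corollary \ref{Bound implies abs c} applies directly. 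If you want to salvage your outline, you should replace the attempted comparison of $\mu_n$ with $\lambda^n\bar{\mu}|_{R_n}$ by these one--sided monotonicity and fixed--point arguments, for which no convergence rate in Theorem \ref{CollectedThm} is needed.
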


Again, we comment that this is really an equidistribution result, requiring that for the probability measure $\frac{1}{|\bar{\mu}|_{R_n}|}\pi_e(\bar{\mu}|_{R_n})$ the mass of certain intervals (involved in the definition of the step function g) is sufficiently close to the Lebesgue measure of those intervals.

\subsection{Proof of Theorem \ref{moving to w}}

In Theorem \ref{CountingTheorem} we gave a criteria for the absolute continuity of $\nu_{\underline{\beta}}$ in terms of the measure $\mu_n$, which can be easily translated to a criteria involving $\bar{\mu}_n$. In order to relate this to $\bar{\mu}$, we need first to consider the subset of $\bar{X}$ upon which $\bar{\mu}_n$ is supported. 

Note that in the free direction our maps $\bar{T}_i$ act by $x\to \beta_{d+s+1}(x)+i$, and so points $\bar{T}_{a_n}\circ \cdots \circ \bar{T}_{a_1}(0)$ must lie in $R_n$. We have the following lemma.
\begin{lemma}\label{m bounds mn}
\[
|\bar{\Phi}^n(\delta_0)|\leq\frac{\lambda^n}{\bar{\mu}(0)}\bar{\mu}(R_n).\]
\end{lemma}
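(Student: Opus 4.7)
The plan is to exploit that $\bar{\mu}$ is an eigenmeasure for $\bar{\Phi}$ with eigenvalue $\lambda$, so $\bar{\Phi}^n(\bar{\mu}) = \lambda^n \bar{\mu}$; dominate $\delta_0$ by a scalar multiple of $\bar{\mu}$; apply $\bar{\Phi}^n$ to both sides using that $\bar{\Phi}$ is monotone on non-negative measures; and finally restrict to $R_n$, which contains the support of $\bar{\mu}_n$.

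The first step is to establish the pointwise inequality of measures
\[
\delta_0 \leq \frac{1}{\bar{\mu}(0)}\,\bar{\mu}
\]
on $\bar{X}$. This is immediate from the definition $\bar{\mu}(A) = \sum_{x \in \bar{X} \cap A} f(x)$ together with $\bar{\mu}(0) = f(0) > 0$: both sides agree on the singleton $\{0\}$, and the right-hand side is non-negative on every other singleton, while $\delta_0$ vanishes there.

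The second step is to apply $\bar{\Phi}^n$ to the above inequality. The operator $\bar{\Phi}$ is a non-negative linear combination of pushforwards by $\bar{T}_{-1}, \bar{T}_0, \bar{T}_1$, hence is linear and monotone on non-negative measures. Iterating the eigenmeasure relation $\bar{\Phi}(\bar{\mu}) = \lambda\bar{\mu}$ then gives
\[
\bar{\Phi}^n(\delta_0) \leq \frac{\lambda^n}{\bar{\mu}(0)}\,\bar{\mu}
\]
as measures on $\bar{X}$.

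The third and final step uses that $\bar{\mu}_n = \bar{\Phi}^n(\delta_0)$ is supported on $R_n$. Indeed, in the free direction the maps $\bar{T}_i$ act by $x \mapsto \beta_{d+s+1} x + i$ with $i \in \{-1,0,1\}$, so the free coordinate of $\bar{T}_{a_n}\circ\cdots\circ\bar{T}_{a_1}(0)$ equals $\sum_{i=1}^n a_i \beta_{d+s+1}^{n-i}$, which is bounded in absolute value by $\sum_{i=0}^{n-1}|\beta_{d+s+1}|^i$. Evaluating the displayed inequality on $R_n$ then yields
\[
|\bar{\Phi}^n(\delta_0)| = \bar{\mu}_n(R_n) \leq \frac{\lambda^n}{\bar{\mu}(0)}\,\bar{\mu}(R_n),
\]
as required. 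I do not anticipate a serious obstacle here: the slightly subtle point is simply recognising that the one-step domination $\delta_0 \leq \bar{\mu}(0)^{-1}\bar{\mu}$ propagates under $\bar{\Phi}^n$ because of the eigenmeasure property, and the support containment $\operatorname{supp}(\bar{\mu}_n) \subset R_n$ is just the triangle inequality in the free coordinate.
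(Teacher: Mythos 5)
Your proof is correct and follows essentially the same route as the paper: dominate $\bar{\mu}(0)\delta_0\leq\bar{\mu}$, push the inequality through $\bar{\Phi}^n$ using monotonicity and the eigenmeasure relation $\bar{\Phi}(\bar{\mu})=\lambda\bar{\mu}$, and evaluate on $R_n$, which contains the support of $\bar{\Phi}^n(\delta_0)$ by the bound on the free coordinate of $\bar{T}_{a_n}\circ\cdots\circ\bar{T}_{a_1}(0)$. No gaps; this matches the paper's argument step for step.
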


\begin{proof}
Since $\bar{\Phi}$ is monotone and $\bar{\mu}(0)\delta_0\leq \bar{\mu}$, using $\bar{\Phi}(\bar{\mu})/\lambda=\bar{\mu}$  we have 
\begin{align*}
    \frac{1}{\lambda^n}\bar{\Phi}^n(\bar{\mu}(0)\delta_0)\leq \frac{1}{\lambda^n}\bar{\Phi}^n(\bar{\mu})=\bar{\mu}.
\end{align*}
On the other hand from the construction of $R_n$ we have that 

\[\frac{1}{\lambda^n}\bar{\Phi}^n(\bar{\mu}(0)\delta_0)(\bar{X}\setminus R_n)=0.\]

Combining these facts gives
\begin{align*}
    |\bar{\Phi}^n(\delta_0)|=\frac{\lambda^n}{\bar{\mu}(0)}\frac{1}{\lambda^n}\bar{\Phi}^n(\bar{\mu}(0)\delta_0)(R_n)\leq\frac{\lambda^n}{\bar{\mu}(0)}\bar{\mu}(R_n).
\end{align*}
\end{proof}

\begin{lemma}\label{R_n big}
Assume that $\lambda<\frac{4}{|\beta_1\cdots \beta_d|}$. Then $\bar{\mu}(R_n)$ grows exponentially in $n$.
\end{lemma}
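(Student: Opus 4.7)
The plan is to use the eigen-relation $\bar{\Phi}(\bar{\mu}) = \lambda\bar{\mu}$ together with the domination $\bar{\mu}\geq f(0)\delta_0$ (which holds since $\bar{\mu}(\{0\}) = f(0) > 0$ by Theorem~\ref{CollectedThm}) to reduce the claim to a lower bound on $\mathcal{N}_n$. Iterating $\bar{\Phi}^n$ on $\bar{\mu}\geq f(0)\delta_0$ and using $\bar{\Phi}^n(\bar{\mu})=\lambda^n\bar{\mu}$, $\bar{\Phi}^n(\delta_0)=\bar{\mu}_n$, and the positivity of $\bar{\Phi}$, we obtain $\lambda^n\bar{\mu}\geq f(0)\bar{\mu}_n$. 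Evaluating at $R_n$ (which contains $\operatorname{supp}\bar{\mu}_n$) yields
\[
\bar{\mu}(R_n)\;\geq\;\frac{f(0)\,\mathcal{N}_n}{\lambda^n},
\]
which is essentially Lemma~\ref{m bounds mn} read in the other direction. So it suffices to show that $\mathcal{N}_n/\lambda^n$ grows exponentially in $n$.

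For a lower bound on $\mathcal{N}_n/\lambda^n$ I would exploit the cut-and-project structure of $\bar{X}$ given by Condition~\ref{condition 1}, which realises $\bar{X}$ as a Delone subset of the lattice $\bar{Z}$ inside the window $\pi_c^{-1}(\operatorname{int}(\mathcal{R}))\cap S$. The slab $R_n$ has free-coordinate width $\asymp |\beta_{d+s+1}|^n$, so by the standard uniform density result for model sets, $|\bar{X}\cap R_n|\asymp |\beta_{d+s+1}|^n$. Moreover, every $x\in\bar{X}\cap R_n$ admits a length-$n$ representation $\sum_{i=1}^n c_i\bar{\beta}^{n-i}$ with $c_i\in\{-1,0,1\}$ (pad any shorter representation with zero digits), so $\operatorname{supp}\bar{\mu}_n=\bar{X}\cap R_n$ and Theorem~\ref{CollectedThm}(1) gives $\bar{\mu}_n(x)/\lambda^n\to f(x)>0$ at each such $x$. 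Putting the pieces together,
\[
\frac{\mathcal{N}_n}{\lambda^n}\;=\;\sum_{x\in\bar{X}\cap R_n}\frac{\bar{\mu}_n(x)}{\lambda^n}\;\gtrsim\;|\bar{X}\cap R_n|\;\asymp\;|\beta_{d+s+1}|^n,
\]
so that $\bar{\mu}(R_n)\gtrsim|\beta_{d+s+1}|^n$, which is exponential in $n$ since $|\beta_{d+s+1}|>1$.

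The main obstacle is making the middle inequality above precise: the convergence $\bar{\mu}_n(x)/\lambda^n\to f(x)$ is only pointwise and need not be uniform over the growing set $R_n$, so one cannot simply sum the limits. A natural workaround is to fix an integer $k$ and apply the iterated inequality $\bar{\mu}\geq(f(0)/\lambda^k)\bar{\mu}_k$ to extract a lower bound on $\bar{\mu}_n(x)/\lambda^n$ that is uniform for $x\in\bar{X}\cap R_k$ and all $n\geq k$; then, by letting $k$ grow slowly with $n$ and covering $R_n$ by appropriately translated copies of $R_k$ inside the cut window, the uniform bound on each translate propagates to give the desired rate. The hypothesis $\lambda<4/|\beta_1\cdots\beta_d|$ enters precisely here, to calibrate the trade-off between the $\sim|\beta_{d+s+1}|^{n-k}$ many translates and the attenuation coming from $\bar{\Phi}^{n-k}$, so that the resulting lower bound is genuinely exponential rather than merely polynomial.
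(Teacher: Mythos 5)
Your opening reduction is correct and is exactly the paper's Lemma \ref{m bounds mn} read in the opposite direction: monotonicity of $\bar{\Phi}$, $\bar{\mu}\geq\bar{\mu}(0)\delta_0$ and $\bar{\Phi}(\bar{\mu})=\lambda\bar{\mu}$ give $\bar{\mu}(R_n)\geq\frac{\bar{\mu}(0)}{\lambda^n}\mathcal N_n$, so everything hinges on a lower bound for $\mathcal N_n/\lambda^n$. It is in that second half that there is a genuine gap. Your route needs (i) $\operatorname{supp}\bar{\mu}_n=\bar{X}\cap R_n$ and (ii) a lower bound on $\bar{\mu}_n(x)/\lambda^n$ (equivalently on $f$) that is uniform over the growing slabs $R_n$. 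Neither is available. For (i) only the inclusion $\operatorname{supp}\bar{\mu}_n\subseteq\bar{X}\cap R_n$ holds: a point of $\bar{X}$ with small free coordinate need not admit a representing word of length at most $n$ --- its contracting coordinates may lie deep inside $\mathcal{R}$ and force a much longer word --- so ``padding with zeros'' does not apply. For (ii), Theorem \ref{CollectedThm} gives only pointwise positivity of $f$ and the upper bound $f(x)\leq f(0)$; no uniform positive lower bound is stated, and the convergence $\bar{\mu}_n(x)/\lambda^n\to f(x)$ is pointwise only, as you yourself note. The proposed repair (covering $R_n$ by translates of $R_k$ with $k$ growing slowly) is a sketch, not an argument: the maps $\bar{T}_i$ scale rather than preserve the free coordinate, so $\bar{\mu}$ and $\bar{\mu}_n$ have no translation periodicity along the free direction that would let a bound on one copy transfer to its translates, and you never say concretely how the hypothesis $\lambda<4/|\beta_1\cdots\beta_d|$ would enter. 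Two further warning signs: your density claim $\#(\bar{X}\cap R_n)\asymp|\beta_{d+s+1}|^n$ leans on Condition \ref{condition 1}, which is not a hypothesis of this lemma, and your target bound $\bar{\mu}(R_n)\gtrsim|\beta_{d+s+1}|^n$ would, if provable this way, not use the hypothesis at all.

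The paper avoids all of this by bounding $\mathcal N_n$ from below directly, with no reference to $\bar{X}$, $f$, or the cut-and-project structure: the $2^n$ rectangles $(T_{a_1}\circ\cdots\circ T_{a_n})^{-1}(I^+)$ all lie in $I^+$ and each has volume $|\beta_1\cdots\beta_d|^{-n}\Leb(I^+)$, so (by a Cauchy--Schwarz/pigeonhole argument on the covering-number function, the ``evenly spread'' configuration being extremal) the number of intersecting pairs satisfies $\mathcal N_n\geq\frac{1}{2}\left(\frac{4}{|\beta_1\cdots\beta_d|}\right)^n$. Combined with your first step this yields $\bar{\mu}(R_n)\geq c\left(\frac{4}{|\beta_1\cdots\beta_d|\,\lambda}\right)^n$, and this is precisely where the hypothesis $\lambda<4/|\beta_1\cdots\beta_d|$ is used: it makes the ratio exceed $1$. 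Replacing the second half of your argument by this elementary counting estimate closes the gap.
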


\begin{proof}
We note that the $2^n$ rectangles $(T_{a_1}\circ \cdots T_{a_n})^{-1}(I^+)$ are each contained in $I^+$ and each have an area of $\frac{1}{|\beta_1\cdots \beta_d|^n}\times$ Area$(I^+)$, giving a total area of $\frac{2^n}{|\beta_1\cdots \beta_d|^n}\times$ Area$(I^+)$. A lower bound for the total number of overlaps comes from assuming these rectangles are evenly spread, in which case one would have that a typical rectangle intersects $\frac{2^n}{|\beta_1\cdots \beta_d|^n}$ others, giving $\mathcal N_n\geq \frac{1}{2}\frac{4^n}{|\beta_1\cdots \beta_d|^n}$.

Then 
\[\bar{\mu}(R_n)\geq \frac{\mathcal N_n}{\lambda^n}=\frac{1}{2}\left(\frac{4}{|\beta_1\cdots \beta_d|}\frac{1}{\lambda}\right)^n.\] 
which grows exponentially by our assumption.
\end{proof}
We stress that $\lambda$ can be computed by a finite calculation when $\beta$ has no Galois conjugates of absolute value $1$ (as we are assuming throughout this article). Values of $\lambda$ are computed for many values in $\cite{AFKP}$ and in all examples we have computed satisfy the condition of Lemma \ref{R_n big}.

\begin{lemma}\label{4.4}
There exist $\epsilon_n$ tending to zero exponentially quickly such that
\begin{eqnarray*}\bar{\mu}(R_{n+1})&\leq&\frac{1+\epsilon_n}{\lambda}|\bar{\Phi}(\bar{\mu}|_{R_n})|\\
&=&\frac{(1+\epsilon_n)}{\lambda} \int g d\pi_e(\bar{\mu}|_{R_n})
\end{eqnarray*}

\end{lemma}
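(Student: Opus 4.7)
The strategy is to exploit the self-similarity $\bar\mu=\lambda^{-1}\bar\Phi(\bar\mu)$ together with the fact that each $\bar T_i$ ($i\in\{-1,0,1\}$) acts on the free coordinate by the affine map $y\mapsto \beta_{d+s+1}y+i$. Consequently the preimages $\bar T_i^{-1}(R_{n+1})$ differ from $R_n$ only by a $\pi_{\free}$-boundary layer of bounded thickness $2/|\beta_{d+s+1}|$, and the error term $\epsilon_n$ will measure exactly the $\bar\mu$-mass sitting in this layer.

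Set $R_n^+:=\{x\in\bar X:|\pi_{\free}(x)|\leq\tfrac{2}{|\beta_{d+s+1}|}+\sum_{j=0}^{n-1}|\beta_{d+s+1}^j|\}$. A direct application of the triangle inequality to $\pi_{\free}(\bar T_i(x))=\beta_{d+s+1}\pi_{\free}(x)+i$ yields, for each $i\in\{-1,0,1\}$,
\[R_n\cap \bar T_i^{-1}(S)\;\subseteq\;\bar T_i^{-1}(R_{n+1})\;\subseteq\;R_n^+\cap \bar T_i^{-1}(S).\]
The left-hand inclusion forces $\bar\Phi(\bar\mu|_{R_n})$ to be supported on $R_{n+1}$; the right-hand one says that passing from $R_n\cap \bar T_i^{-1}(S)$ to $\bar T_i^{-1}(R_{n+1})$ adds only mass sitting in $R_n^+\setminus R_n$. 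Summing these with coefficients $1,2,1$ over $i$ and using $\lambda\bar\mu(R_{n+1})=\bar\Phi(\bar\mu)(R_{n+1})$ gives
\[\lambda\bar\mu(R_{n+1})\leq |\bar\Phi(\bar\mu|_{R_n})|+4\bar\mu(R_n^+\setminus R_n),\]
which is the claimed inequality with $\epsilon_n:=4\bar\mu(R_n^+\setminus R_n)/|\bar\Phi(\bar\mu|_{R_n})|$. The equality on the second line of the lemma is Lemma \ref{MassGrowth} applied to $\bar\mu|_{R_n}$, after noting that $\chi_S\circ \bar T_i=\chi_I\circ T_i\circ \pi_e$ so the relevant test function on $\bar X$ is exactly $g\circ\pi_e$.

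It then remains to verify that $\epsilon_n$ decays exponentially. The numerator is bounded uniformly in $n$: $R_n^+\setminus R_n$ is the union of two $\pi_{\free}$-strips of constant thickness $2/|\beta_{d+s+1}|$, while $\bar X$ is uniformly discrete and bounded in every other coordinate (by Condition \ref{condition 1} its projection lies in $I\times\mathcal R$), so only $O(1)$ points of $\bar X$ lie in $R_n^+\setminus R_n$, each of weight at most $f(0)$ by Theorem \ref{CollectedThm}. For the denominator, I would split $\bar\mu=\bar\mu|_{R_n}+\bar\mu|_{\bar X\setminus R_n}$ and evaluate $\bar\Phi$ at $R_{n+1}$: the first piece contributes exactly $|\bar\Phi(\bar\mu|_{R_n})|$ by the support property above, while the right-hand inclusion bounds the second by $4\bar\mu(R_n^+\setminus R_n)=O(1)$, giving
\[|\bar\Phi(\bar\mu|_{R_n})|\geq \lambda\bar\mu(R_{n+1})-O(1).\]
The hypothesis $\lambda<4/|\beta_1\cdots\beta_d|$ makes $\bar\mu(R_{n+1})$ grow exponentially by Lemma \ref{R_n big}, so eventually $|\bar\Phi(\bar\mu|_{R_n})|\geq\tfrac{\lambda}{2}\bar\mu(R_{n+1})$ and thus $\epsilon_n=O(1/\bar\mu(R_{n+1}))$ decays exponentially.

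The main subtlety is the lower bound on $|\bar\Phi(\bar\mu|_{R_n})|$: rather than estimating a pushforward directly one uses the exact relation $\bar\Phi(\bar\mu)=\lambda\bar\mu$ to reduce it to the very same boundary-strip estimate required for the error, so a single geometric ingredient---the $O(1)$ total mass of $R_n^+\setminus R_n$---simultaneously controls both the numerator of $\epsilon_n$ and the correction to the denominator.
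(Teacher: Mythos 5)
Your proof is correct and follows essentially the same route as the paper: both arguments control the discrepancy between $\bar{\mu}$ on $R_{n+1}$ and $\frac{1}{\lambda}\bar{\Phi}(\bar{\mu}|_{R_n})$ by the $\bar{\mu}$-mass of a boundary layer of uniformly bounded width in the free direction (bounded by $O(1)\cdot f(0)$ using uniform discreteness, boundedness of $\bar X$ in the non-free coordinates, and $f\le f(0)$ from Theorem \ref{CollectedThm}), and then invoke Lemma \ref{R_n big} to make $\epsilon_n$ decay exponentially. The only difference is bookkeeping: you account for the layer on the preimage side via $R_n^+\setminus R_n$ (and your boundedness of $\pi_D(\bar X)$ needs only $\bar X\subseteq S$ and $\pi_c(\bar X)\subseteq\mathcal R$, not Condition \ref{condition 1}), whereas the paper discards a bad set of at most $M$ points of $R_{n+1}$ on the image side, where the exact identity $\bar{\mu}=\frac{1}{\lambda}\bar{\Phi}(\bar{\mu}|_{R_n})$ holds pointwise.
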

\begin{proof}
Let $x\in \bar{X}$ be such that 
\begin{align*}
    |\pi_{free}(x)|\leq -2+\sum_{i=0}^n|\beta_{d+s+1}^i|.
\end{align*}

Then \[|\pi_{free}(\bar{T}_i^{-1}(x))|=\left|\frac{\pi_{free}(x)-i}{\beta_{d+s+1}}\right|\leq \sum_{i=0}^{n-1}|\beta_{d+s+1}^i|\] 

and so $\bar{T}_i^{-1}(x)\in R_n\cup (\bar{\mathbb{K}}\setminus\bar{X})$ for each $i\in\{-1,0,1\}$. Hence from $\frac{\bar{\Phi}(\bar{\mu})}{\lambda}=\bar{\mu}$ we get
\begin{align*}
    \frac{1}{\lambda}\bar{\Phi}(\bar{\mu}|_{R_n})(x)&=\frac{1}{\lambda}\left(\bar{\mu}|_{R_n}(\bar{T}_{-1}^{-1}(x))+2\bar{\mu}|_{R_n}(\bar{T}_{0}^{-1}(x))+\bar{\mu}|_{R_n}(\bar{T}_{1}^{-1}(x))\right)
    \\&=\frac{1}{\lambda}\left(\bar{\mu}(\bar{T}_{-1}^{-1}(x))+2\bar{\mu}(\bar{T}_{0}^{-1}(x))+\bar{\mu}(\bar{T}_{1}^{-1}(x))\right)\\
    &=\frac{1}{\lambda}\bar{\Phi}(\bar{\mu})(x)=\bar{\mu}(x).
\end{align*}
Thus
\begin{align}\label{most points are good}
    \notag\bar{\mu}&\left(\left\{x\in R_{n+1}: |\pi_{free}(x)|\leq -2+\sum_{i=0}^n|\beta_{d+s+1}^i|\right\}\right)
    \\=&\frac{1}{\lambda}\bar{\Phi}(\bar{\mu}|_{R_n})\left(\left\{x\in R_{n+1}: |\pi_{free}(x)|\leq -2+\sum_{i=0}^n|\beta_{d+s+1}^i|\right\}\right).
\end{align}

The diameter of 
\begin{align*}
    \left\{x\in R_{n+1}: |\pi_{free}(x)|> -2+\sum_{i=0}^n|\beta_{d+s+1}^i|\right\}
\end{align*}

is uniformly bounded so there is $M>0$ that depends only on $\beta$ such that 
\begin{align*}
    \#\left\{x\in R_{n+1}: |\pi_{free}(x)|> -2+\sum_{i=0}^n|\beta_{d+s+1}^i|\right\}<M
\end{align*}

for all $n\in\mathbb{N}$. By Theorem \ref{CollectedThm} we have $\bar{\mu}(x)\leq \bar{\mu}(0)$ for all $x\in \bar{X}$ and so 
\begin{align}\label{bad bit is bounded}
    \bar{\mu}\left(\left\{x\in R_{n+1}: |\pi_{free}(x)|> -2+\sum_{i=0}^n|\beta_{d+s+1}^i|\right\}\right)<M\bar{\mu}(0).
\end{align}

Combining (\ref{most points are good}) and (\ref{bad bit is bounded}) we have
\begin{align*}
    \bar{\mu}(R_{n+1})&\leq \frac{1}{\lambda}\bar{\Phi}(\bar{\mu}|_{R_n})(\bar{X})+M\bar{\mu}(0)\\
    &\leq \frac{1}{\lambda}\bar{\Phi}(\bar{\mu}|_{R_n})(\bar{X})(1+\epsilon_n)
\end{align*}

Where $\epsilon_n=\frac{M\bar{\mu}(0)}{\frac{1}{\lambda}\bar{\Phi}(\bar{\mu}|_{R_n})(S)}$ tends to zero exponentially fast due to Lemma \ref{R_n big}. 

Finally we mention that, by the construction of $\bar{\Phi}$
\[|\bar{\Phi}(\bar{\mu}|_{R_n})|=\int g d\pi_e(\bar{\mu}|_{R_n}),\]
this is is just the analogue of Lemma \ref{MassGrowth} for the lifted operator $\bar{\Phi}$ rather than $\Phi$.
\end{proof}

\begin{prop}\label{equid on w bounds the errors}

If $\lambda<\frac{4}{|\beta_1\cdots \beta_d|}$ there is $c>1$ such that  

\begin{align*} 
    |\Phi^n(\delta_0)|\leq c\frac{\bar{\mu}(R_0)}{\bar{\mu}(0)}\prod_{i=0}^{n-1} \frac{1}{\bar{\mu}(R_i)}\int gd\pi_e(\mu|_{R_i}).
\end{align*}

\end{prop}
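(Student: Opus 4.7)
The plan is to chain Lemma \ref{m bounds mn} with an iterated application of Lemma \ref{4.4}. First I would note that $\Phi^n(\delta_0)$ and $\bar{\Phi}^n(\delta_0)$ have the same total mass, since the additional coordinates of the lift only distinguish points rather than redistribute mass. Thus $|\Phi^n(\delta_0)|=|\bar{\Phi}^n(\delta_0)|$ and Lemma \ref{m bounds mn} gives the bound
\[
|\Phi^n(\delta_0)|\leq\frac{\lambda^n}{\bar{\mu}(0)}\bar{\mu}(R_n),
\]
which is the starting point.

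Next I would write $\bar{\mu}(R_n)$ as the telescoping product $\bar{\mu}(R_0)\prod_{i=0}^{n-1}\bar{\mu}(R_{i+1})/\bar{\mu}(R_i)$ and estimate each ratio using Lemma \ref{4.4}, namely
\[
\frac{\bar{\mu}(R_{i+1})}{\bar{\mu}(R_i)}\leq\frac{1+\epsilon_i}{\lambda}\cdot\frac{1}{\bar{\mu}(R_i)}\int g\,d\pi_e(\bar{\mu}|_{R_i}).
\]
Substituting back and cancelling the factors of $\lambda^n$ that appear in the numerator against those produced by the telescope yields
\[
|\Phi^n(\delta_0)|\leq\frac{\bar{\mu}(R_0)}{\bar{\mu}(0)}\prod_{i=0}^{n-1}(1+\epsilon_i)\cdot\frac{1}{\bar{\mu}(R_i)}\int g\,d\pi_e(\bar{\mu}|_{R_i}).
\]

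The last step is to absorb $\prod_{i=0}^{n-1}(1+\epsilon_i)$ into a uniform constant $c>1$ which works simultaneously for every $n$. This reduces to showing $\sum_i\epsilon_i<\infty$. From the proof of Lemma \ref{4.4}, $\epsilon_n$ is a fixed multiple of $1/|\tfrac{1}{\lambda}\bar{\Phi}(\bar{\mu}|_{R_n})|$, and this denominator is comparable to $\bar{\mu}(R_{n+1})$, hence at least $\bar{\mu}(R_n)$. Under the assumption $\lambda<4/|\beta_1\cdots\beta_d|$, Lemma \ref{R_n big} gives exponential growth of $\bar{\mu}(R_n)$ and therefore exponential decay of $\epsilon_n$, so the infinite product converges to a finite $c:=\prod_{i=0}^{\infty}(1+\epsilon_i)>1$ that bounds every finite truncation. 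The main subtle point is precisely the verification that the hypothesis on $\lambda$ is used to make $\epsilon_n$ summable; beyond this the argument is bookkeeping, requiring no new ingredients past Lemmas \ref{m bounds mn}, \ref{4.4}, and \ref{R_n big}.
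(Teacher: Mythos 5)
Your proposal is correct and follows essentially the same route as the paper: pass to the lift via Lemma \ref{m bounds mn}, telescope $\bar{\mu}(R_n)$ and bound each ratio with Lemma \ref{4.4}, then absorb $\prod_{i}(1+\epsilon_i)$ into a uniform constant $c$ using the exponential growth of $\bar{\mu}(R_n)$ from Lemma \ref{R_n big} under the hypothesis $\lambda<4/|\beta_1\cdots\beta_d|$. Your closing observation about where the hypothesis on $\lambda$ enters matches the paper's argument exactly.
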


\begin{proof}
From  Lemma \ref{4.4} we have 
\[
\lambda \frac{\bar{\mu}(R_{n+1})}{\bar{\mu}(R_n)}\leq (1+\epsilon_n)\frac{\int gd\pi_e(\bar{\mu}|_{R_n})}{\bar{\mu}(R_n)}.
\]





The above combined with Lemma \ref{m bounds mn} leads to

\begin{eqnarray*}
|\Phi^n(\delta_0)|&=& |(\bar{\Phi}^n(\delta_0))|\\
&\leq& \frac{\lambda^n}{\bar{\mu}(0)}\bar{\mu}(R_n)\\
&=& \frac{\bar{\mu}(R_0)}{\bar{\mu}(0)}\prod_{i=0}^{n-1}\frac{\lambda \bar{\mu}(R_{i+1})}{\bar{\mu}(R_i)}\\
&\leq& \frac{\bar{\mu}(R_0)}{\bar{\mu}(0)}\left(\prod_{i=0}^{n-1} (1+\epsilon_i)\frac{1}{|\bar{\mu}(R_i)|}\int gd\pi_e(\bar{\mu}|_{R_i})\right).
\end{eqnarray*}

The proof is complete by observing that from Lemma \ref{R_n big}  we have 
\[
\prod_{i=0}^{\infty}(1+\epsilon_i)<\infty.
\]

\end{proof}

We can now prove Theorem \ref{moving to w}.  Assuming, as in the theorem, that
\[
\sum_{n=1}^{\infty}\log \left(\frac{|\gb_1\cdot...\cdot\gb_{d}|}{4}\frac{1}{\bar{\mu}(R_n)}\int gd \pi_e(\bar{\mu}|_{R_n})\right)\leq\log(C)
\]

gives 

\[
\prod_{i=0}^{n-1} \frac{1}{|\bar{\mu}(R_n)|}\int gd \pi_e(\bar{\mu}|_{R_n})\leq C\left(\frac{4}{|\gb_1\cdot...\cdot\gb_{d}|}\right)^n,
\]

hence, by Proposition \ref{equid on w bounds the errors},
\[
\mathcal N_n=|\phi^n(\delta_0)|\leq C'\left(\frac{4}{|\gb_1\cdot...\cdot\gb_{d}|}\right)^n
\] for some $C'>0$. Thus the conditions of  Corollary \ref{Bound implies abs c} are satisfied and so the measure $\nu_{\underline{\beta}}$ is absolutely continuous. This completes the proof of Theorem \ref{moving to w}.

\section{Domain Exchange Transformation}\label{Sec5}
In this section we introduce domain exchange transformations, allowing us to state and prove Theorem \ref{FinThm}.

\begin{definition}
We define the set  the successor function $\succf:\bar{X}\rightarrow \bar{X}$ by

\begin{align*}
    \pi_{\free}(\succf(x))=\min\{\pi_{\free}(y):y\in \bar{X},\pi_{\free}(y)>\pi_{\free}(x)\}.
\end{align*}

 \end{definition}

We will later see that the successor function projects to a domain exchange transformation on $D=I\times\mathcal R$. We clarify that in our context a domain exchange transformation is defined as follows.

\begin{definition}
Let $E$ be a compact subset of a euclidean space and $T:E\rightarrow E$. The map $T$ is call a domain exchange transformation if there are $E_1,...,E_n$ measurable subsets of $E$ such that following hold.
\begin{itemize}
    \item $\{E_1,...,E_n\}$ is a partition of $E$.
    \item The map $T$ is an injection. 
    \item If $i\in\{1,...n\}$ then $T|_{D_i}$ is a translation.
\end{itemize}
\end{definition}

Let $\pi_D:\bar{X}\to D$ be given by $\pi_D(x_1,\cdots x_{d+s+1})=(x_1, \cdots x_{d+s})$. Again we notice that $\pi_D|_{\bar{Z}}$ is injective. 

\begin{definition}\label{wndef}
Let $w_n$ be the measure on $D$ defined by 
 \begin{align*}
 w_n=\sum_{\kappa=0}^m\bar{\mu}(\succf^\kappa(0))\delta_{\pi_D(\succf^\kappa(0))},
 \end{align*}
 
 where $m$ is the greatest natural number such that
 
 \[\pi_{\free}(\succf^m(0))\leq\sum_{i=0}^{n-1}|\beta_{d+s+1}^i|.\]
\end{definition}
$w_n$ is the image under projection onto coordinates $1,\cdots,d+s$ of the measure $\bar{\mu}$ restricted in the free direction to the range $[0,\sum_{i=0}^{n-1}|\beta_{d+s+1}^i|]$.

Theorem \ref{moving to w} gave sufficient conditions for the absolute continuity of $\nu_{\underline{\beta}}$ in terms of convergence to Lebesgue of the measures $\pi_{e}w_n$, which were projections onto expanding coordinates $1,\cdots, d$ of the measure $\bar{\mu}$ restricted to a bounded region in the free direction.

Here we stress that the successor function projects to a domain exchange transformation on $I\times\mathcal R$. 



Recall that $D=I\times\mathcal R$.

\begin{definition}
Let 

\begin{align*}
    W=\{x\in\bar{\mathbb{K}}:\pi_c(x)\in\operatorname{int}(\mathcal{R}),\pi_e(x)\in I\}
\end{align*}

and define $T':D\rightarrow\bar{Z}$ by $T'(x)=u$ where 

\begin{align*}
    \pi_{\free}(y+u)=\min\left\{\pi_{\free}(z):z\in \left(y+\bar{Z}\right) \cap W\text{ and }\pi_{\free}(z)>\pi_{\free}(y)   \right\}
\end{align*}

for any $\pi_D(y)=x$.
\end{definition}

It follows from the geometry of $W$ that $T'$ is well defined and that $T'(D)$ is finite. So there are $D_1,...,D_N\subseteq D$ and $u_1,...,u_N\in\bar{Z}$ such that $\{D_1,...,D_N\}$ is a partition of $D$ and 

\begin{align*}
    x\in D_i \Rightarrow T'(x)=u_i.
\end{align*}

Notice that when $x\in S\cap\bar{Z}$ then $x+T'(\pi_D(x))=\succf(x)$.

\begin{lemma}\label{T is DET}
The map $T:D\rightarrow D$ defined by 

\begin{align*}
    T(x)=x+\pi_D(T'(x))
\end{align*}

defines a domain exchange transformation $(T,D_1,...,D_N)$.
\end{lemma}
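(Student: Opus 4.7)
The plan is to verify the three defining properties of a domain exchange transformation for the tuple $(T, D_1, \ldots, D_N)$, together with the assertion that $T$ indeed maps $D$ into itself. The partition property is already in hand from the construction preceding the lemma. The translation property on each piece is essentially automatic: by definition of the partition, $T'$ is constant on $D_i$ with value $u_i$, so $T(x) = x + \pi_D(u_i)$ is translation by the fixed vector $\pi_D(u_i) \in \bar{\mathbb{K}}$.

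For $T(D) \subseteq D$, I would fix $x \in D$ and choose any lift $y \in \bar{\mathbb{K}}$ with $\pi_D(y) = x$. By construction the minimiser in the definition of $T'(x)$ is some $u \in \bar{Z}$ for which $y+u \in W$, i.e.\ $\pi_e(y+u) \in I$ and $\pi_c(y+u) \in \operatorname{int}(\mathcal{R})$. Hence $T(x) = \pi_D(y+u) \in I \times \operatorname{int}(\mathcal{R}) \subseteq D$.

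The main content is injectivity, which I would prove by a minimality-and-contradiction argument exploiting the injectivity of $\pi_{\free}$ on $\bar{Z}$. Suppose $T(x_1) = T(x_2) = x''$ with $x_k \in D_{i_k}$, pick any lift $y''$ of $x''$, and set $y_k := y'' - u_{i_k}$. Then each $y_k$ is a lift of $x_k$ with $y_k + u_{i_k} = y'' \in W$, and $y_2 - y_1 = u_{i_1} - u_{i_2} \in \bar{Z}$. Assume for contradiction $x_1 \neq x_2$. Then $y_1 \neq y_2$, and the injectivity of $\pi_{\free}|_{\bar{Z}}$ gives $\pi_{\free}(y_1) \neq \pi_{\free}(y_2)$; after relabelling, $\pi_{\free}(y_1) < \pi_{\free}(y_2)$. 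The minimality in the definition of $u_{i_2}$ applied at $y_2$ forces $\pi_{\free}(y_2) < \pi_{\free}(y'')$, so that $y_2$ lies strictly between $y_1$ and $y''$ in the free direction. Provided $y_2 \in W$, this makes $y_2$ an element of $(y_1 + \bar{Z}) \cap W$ whose free coordinate is strictly smaller than that of $y'' = y_1 + u_{i_1}$, contradicting the minimality in the definition of $u_{i_1}$ at $y_1$. Hence $x_1 = x_2$ and $T$ is injective.

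The main obstacle I anticipate is precisely the caveat $y_2 \in W$. The domain $D = I \times \mathcal{R}$ is built from the closed fractal while the cut-and-project window $W$ uses $\operatorname{int}(\mathcal{R})$, so at points $x$ with $\pi_c(x) \in \partial \mathcal{R}$ the lift $y_2$ might sit on the boundary of $W$ rather than inside it, and the contradiction breaks down. Since $\partial \mathcal{R}$ has Lebesgue measure zero, $T$ is nevertheless a domain exchange transformation in the natural almost-everywhere sense needed for the ergodic-theoretic applications of Section \ref{Sec5}; a fully careful write-up should either restrict attention to the open dense set $I \times \operatorname{int}(\mathcal{R})$ or fix a tie-breaking convention on the boundary compatible with the partition $\{D_1, \ldots, D_N\}$ already given.
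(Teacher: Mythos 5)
Your proof is correct and follows essentially the same route as the paper: lift the two preimages to points of a common $\bar{Z}$-coset sent to the same image point, use the injectivity of $\pi_{\free}$ on $\bar{Z}$ to order them in the free direction, and contradict the minimality defining $T'$. The boundary caveat you flag (the intermediate lift must lie in $W$, which is defined via $\operatorname{int}(\mathcal{R})$ while $D$ uses the closed set $\mathcal{R}$) is a genuine subtlety that the paper's own proof also passes over silently, so your treatment, including the check that $T(D)\subseteq D$, is if anything more careful than the original.
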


\begin{proof}
We only need to prove that $T$ is injective. Let, aiming for a contradiction, $x,y\in D$ such that $T(x)=T(y)$. We can choose $x',y'\in S$ with $\pi_D(x')=x$ and $\pi_D(y')=y$ such that $x'+T'(x)=y'+T'(y)$ since $\pi_D(x'+T'(x))=T(x)=T(y)=\pi_D(y'+T'(y))$ and we can freely determine $\pi_{free}(x')$ and $\pi_{free}(y')$. Notice that $y'=x'+T'(x)-T'(y)\in x'+\bar{Z}$ so $x'\neq y'\Rightarrow \pi_{\free}(x')\neq\pi_{\free}(y')$. Assume, without loss of generality, that $\pi_{\free}(y')<\pi_{\free}(x')$. We have $\pi_{\free}(y')<\pi_{\free}(x')<\pi_{\free}(x'+T'(x))=\pi_{\free}(y'+T'(y))$ which contradicts the definition of $T'$ since $x'=y'+T'(y)-T'(x)\in y'+\bar{Z}$.
\end{proof}

Notice that, under condition \ref{condition 1}, $\pi_D(\succf^n(0))=T^n(0)$ since Theorem \ref{CollectedThm} implies $\bar{X}=\bar{Z}\cap W$. For $x\in D$, we define $s(x)$ to be the unique $i$ such that $x\in D_i$. Now we move on to give a characterization of the measures $w_n$ which shows that they have a special structure that could be used to prove equidistribution properties, such as theorem \ref{moving to w} demands for the absolute continuity of $\nu_\beta$. The main ingredient of the proof is theorem 1.3 of \cite{Measures}. For this reason we need to impose the same condition which appears in that theorem and define the set $\Delta$ which also appears in it, as we do below.

\begin{definition}\label{DeltaDef} Let
\begin{align*}
\Delta=\{x-y: &x,y\in \bar{X} \text{ and } \\
&\exists c_1\cdots c_n, d_1\cdots d_n\in\{-1,0,1\}^n: \bar{T}_{c_n}\circ \cdots \bar{T}_{c_1}(x)=\bar{T}_{d_n}\cdots \bar{T}_{d_1}(y)\}.
\end{align*}
\end{definition}
That is, $\Delta$ is the set of differences between points $x,y\in\bar{X}$ which can be mapped to the same point in the future by the application of maps $\bar{T}_i$. Before we state proposition \ref{structure of w} we set $S_i$ to be the maps  $\bar{T_i}$ restricted to the contracting coordinates $d+1,...,d+s$.

\begin{prop}\label{structure of w}
 Under condition \ref{condition 1}, there are functions $\bar{f}_1,...,\bar{f}_N:\mathcal{R}\rightarrow \mathbb{R}^+$ such that
 
 \begin{itemize}
 
\item[i)]  There exists a word $w$ and constants $C_1>0$, $C_2\in(0,1)$ such that for any $a_1\cdots a_n\in\{-1,0,1\}^n$ which contains $r$ non-overlapping copies of the word $w$, $\bar{f}_i$ varies by at most $C_1C_2^{r-1}$ on $S_{a_1}\circ \cdots \circ S_{a_n}(\mathcal R)$.

\item[ii)]  If $m$ is the greatest natural number such that
 
 \[\pi_{\free}(\succf^m(0))\leq\sum_{i=0}^{n-1}|\beta_{d+s+1}^i|,\]
 
 then

  \begin{align*}
    w_n=\bar{\mu}(0)\sum_{\kappa=0}^m\left(\prod_{i=0}^{\kappa-1}\exp\left(    \bar{f}_{s(T^i(0))}(\pi_c(T^i(0)))\right)\right)\delta_{T^\kappa(0)}
\end{align*}

\end{itemize}
\end{prop}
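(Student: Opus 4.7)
The plan is to first establish (ii) via a telescoping decomposition that reduces the claim to identifying the ratio $\bar{\mu}(\succf(x))/\bar{\mu}(x)$ as a function of $s(\pi_D(x))$ and $\pi_c(x)$ alone, and then to deduce (i) from the cocycle-type representation of $\bar{\mu}$ given by Theorem 1.3 of \cite{Measures} combined with the contraction of the maps $S_{a_i}$ on $\mathcal R$.

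For part (ii), write $x_\kappa := \succf^\kappa(0)$, so that $\pi_D(x_\kappa) = T^\kappa(0)$ by the comment following Lemma \ref{T is DET}, and the increment $x_{\kappa+1} - x_\kappa$ equals the translation vector $u_{s(T^\kappa(0))}$ attached to the partition element of $T^\kappa(0)$. Telescoping gives
\[
\bar{\mu}(x_\kappa) \;=\; \bar{\mu}(0)\prod_{i=0}^{\kappa-1}\frac{\bar{\mu}(x_{i+1})}{\bar{\mu}(x_i)}.
\]
I would define $\bar{f}_j:\mathcal R\to\mathbb R^+$ by setting $\bar{f}_j(\pi_c(x)) := \log\bigl(\bar{\mu}(x+u_j)/\bar{\mu}(x)\bigr)$ for any $x\in\bar{X}$ with $s(\pi_D(x)) = j$, and then show that the right-hand side indeed depends only on $\pi_c(x)$ and $j$. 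This well-definedness is precisely what Theorem 1.3 of \cite{Measures} supplies: it furnishes a representation of $\bar{\mu}$ in which, once the symbolic type of the orbit is fixed, the value at $x\in\bar{X}$ is determined by its contracting coordinate $\pi_c(x)$. Substituting back into the telescoping product and using $w_n = \sum_{\kappa=0}^m \bar{\mu}(x_\kappa)\delta_{T^\kappa(0)}$ from Definition \ref{wndef} yields the formula stated in (ii).

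For part (i), the key geometric input is that although no single $S_0$, $S_{-1}$ or $S_1$ need be a strict contraction in every direction of $\mathcal R$, a sufficiently long composition is. I would therefore choose a finite word $w$ whose associated map $S_w$ contracts $\mathcal R$ by some factor $C_2\in(0,1)$; then any $a_1\cdots a_n$ containing $r$ non-overlapping occurrences of $w$ produces a cylinder with $\operatorname{diam}\bigl(S_{a_1}\circ\cdots\circ S_{a_n}(\mathcal R)\bigr)\lesssim C_2^{r-1}$. Using the explicit formula for $\bar{\mu}$ from \cite{Measures} I would verify that each $\bar{f}_j$ is Lipschitz (or at least Hölder) on $\mathcal R$, so the oscillation over such a cylinder is bounded by $C_1 C_2^{r-1}$ for an appropriate constant $C_1$.

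The main obstacle is part (i). The identification of $\bar{f}_j$ coming from the telescoping argument is \emph{a priori} only pointwise, and one must bootstrap it to a quantitative modulus of continuity on $\mathcal R$. This requires unpacking the cocycle representation of \cite{Measures} to show that small perturbations of $\pi_c(x)$ translate into controlled perturbations of $\bar{\mu}(x)$, and it also requires exhibiting the uniformly contracting word $w$, which depends on the geometry of the cut-and-project scheme for the particular $\beta$ at hand. Once these two ingredients are in place, matching the combinatorial count of occurrences of $w$ with the resulting geometric contraction of the cylinder delivers the $C_1 C_2^{r-1}$ bound.
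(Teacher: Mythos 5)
Your treatment of part (ii) is essentially the paper's own argument: telescope $\bar{\mu}(\succf^\kappa(0))$ into a product of successive ratios, use that $\succf^{i+1}(0)-\succf^{i}(0)=u_{s(T^i(0))}$ and that $\pi_D(\succf^i(0))=T^i(0)$ under Condition \ref{condition 1}, and identify each ratio as $\exp\bigl(\bar{f}_{s(T^i(0))}(\pi_c(T^i(0)))\bigr)$. That part is fine.

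The gap is in part (i). The paper does not prove the variation estimate at all: it is imported verbatim from Theorem 1.3 of \cite{Measures}, which supplies, for differences lying in the set $\Delta$ of Definition \ref{DeltaDef}, functions of $\pi_c(x)$ equal to $\log\bar{\mu}(x+\delta)-\log\bar{\mu}(x)$ \emph{already satisfying} the ``$r$ copies of the word $w$'' oscillation bound; the $\bar f_i$ are then obtained by writing each translation vector $u_i$ as a finite sum of elements of $\Delta$ and summing the corresponding functions (a step your proposal omits entirely -- the cited theorem applies to elements of $\Delta$, not directly to the $u_i$). Your plan instead is to prove that each $\bar f_j$ is Lipschitz or H\"older on $\mathcal R$ and then convert the geometric contraction of the cylinder $S_{a_1}\circ\cdots\circ S_{a_n}(\mathcal R)$ into an oscillation bound. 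You give no argument for this regularity, and the very form of statement (i) warns against it: if uniform H\"older continuity were available, the bound would be stated in terms of the cylinder depth $n$ (all the $S_i$ are contractions on the contracting coordinates, so every deep cylinder is small), not in terms of the number $r$ of occurrences of one distinguished word $w$. The dependence on $w$ reflects that the control on $\log\bar{\mu}(x+\delta)-\log\bar{\mu}(x)$ in \cite{Measures} comes from contraction in the cocycle/renormalisation structure triggered by occurrences of $w$, not from a modulus of continuity plus small diameter. So ``verify that $\bar f_j$ is H\"older by unpacking the cocycle representation'' is not a proof step but a restatement of the difficulty, and as written part (i) remains unproved; to close it you should either quote Theorem 1.3 of \cite{Measures} in the form the paper does (together with the decomposition of $u_i$ into elements of $\Delta$), or genuinely reprove that theorem's oscillation estimate, which is a substantial piece of work lying outside this proposition.
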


\begin{proof}
From Theorem 1.3 in \cite{Measures}, for each $i\in\{1,...,N\}$ there are $\bar{f}_i:\mathcal{R}\rightarrow\mathbb{R}^+$ satisfying i) such that $\bar{f}_i(\pi_c(x))=\log(\bar{\mu}(x+u_i))-\log(\bar{\mu}(x))$ for all $x\in\bar{X}$. We construct $f_i$ by writing  $u_i$ as a sum of members of the set $\Delta$ and summing the respective functions given by the theorem. We have

\begin{align*}
\bar{\mu}(\succf^n(0))&=\bar{\mu}(0)\prod_{i=0}^{n-1}\frac{\bar{\mu}(\succf^{i+1}(0))}{\bar{\mu}(\succf^{i}(0))}
\\&=\bar{\mu}(0)\prod_{i=0}^{n-1}\frac{\bar{\mu}(\succf^{i}(0)+u_{s(\pi_D(\succf^i(0)))})}{\bar{\mu}(\succf^{i}(0))}
\\&=\bar{\mu}(0)\prod_{i=0}^{n-1}\exp\left(    \bar{f}_{s(\pi_D(\succf^i(0)))}(\pi_c(\succf^i(0)))\right)
\\&=\bar{\mu}(0)\prod_{i=0}^{n-1}\exp\left(    \bar{f}_{s(T^i(0))}(\pi_c(T^i(0)))\right)
\end{align*}

so if  $m$ is the greatest natural number such that
 
 \[\pi_{\free}(\succf^m(0))\leq\sum_{i=0}^{n-1}|\beta_{d+s+1}^i|\]
 
 then 

 \begin{align*}
     w_n&=\sum_{\kappa=0}^m\bar{\mu}(\succf^\kappa(0))\delta_{\pi_D\circ\text{ }\succf^\kappa(0)}\\
     &=\bar{\mu}(0)\sum_{\kappa=0}^m\left(\prod_{i=0}^{\kappa-1}\exp\left(    \bar{f}_{s(T^i(0))}(\pi_c(T^i(0)))\right)\right)\delta_{T^\kappa(0)},
 \end{align*}
 
 concluding ii).

\end{proof}

Recall that Theorem \ref{moving to w} gave a condition for the absolute continuity of $\nu_{\underline{\beta}}$ in terms of the measures $\pi_e(\bar{\mu})$. In Definition \ref{wndef} we introduced the measures $w_n$ which were projections of weighted Dirac measures along an orbit of the successor function $succ$, and in Proposition \ref{structure of w} we explain how the weights appear as a cocycle over the dynamical system $T$. Combining these ideas in one theorem gives the following.

\begin{theorem}\label{FinThm}
Assume that $\lambda<4/|\beta_1...\beta_d|$ and condition \ref{condition 1} holds. Then there exists a domain $D=I\times\mathcal R$, a domain exchange transformation $T:D\to D$ and a function $f:D\to\mathbb R^+$ with  $f(x)=\exp(\bar{f}_{s(x)}(\pi_c(x)))$ such that  if the projection onto $I$ of the sequence of measures 
\[
w_n=\sum_{i=1}^nf(0)f(T(0))\cdots f(T^{n-1}(0))\delta_{T^{n-1}(0)}
\]
converge to Lebesgue measure sufficiently quickly, in the sense that

\[
\sum_{n=1}^{\infty}\log \left(\frac{|\gb_1\cdot...\cdot\gb_{d}|}{4}\frac{1}{|w_n|}\int gd \pi_e w_n|_{R_n}\right)\leq\log(C),
\]

then the measure $\nu_{\underline{\beta}}$ is absolutely continuous.
\end{theorem}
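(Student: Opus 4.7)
The plan is to combine three ingredients already assembled in Sections \ref{Sec4} and \ref{Sec5}. Theorem \ref{moving to w} reduces absolute continuity of $\nu_{\underline{\beta}}$ to an equidistribution-with-rate statement for the measures $\pi_e(\bar{\mu}|_{R_n})$. Lemma \ref{T is DET} supplies the domain exchange transformation $T$ on $D = I \times \mathcal{R}$, and Condition \ref{condition 1} ensures $\pi_D(\succf^\kappa(0)) = T^\kappa(0)$, so that the forward $T$-orbit of $0$ tracks the successor orbit of $0$ through $\bar{X}$. Proposition \ref{structure of w} then rewrites $w_n$ as an orbit sum weighted by a cocycle over $T$. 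The theorem is obtained by translating the hypothesis of Theorem \ref{moving to w} into this cocycle language.

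The central step is to identify the cocycle. Setting $f(x) = \exp(\bar{f}_{s(x)}(\pi_c(x)))$, where $\bar{f}_1, \ldots, \bar{f}_N : \mathcal{R} \to \mathbb{R}^+$ are the functions produced by Proposition \ref{structure of w}, part (ii) of that proposition immediately gives
\[
w_n = \bar{\mu}(0)\sum_{\kappa=0}^{m}\left(\prod_{i=0}^{\kappa-1} f(T^i(0))\right)\delta_{T^\kappa(0)},
\]
where $m = m(n)$ is the largest integer with $\pi_{\free}(\succf^m(0)) \leq \sum_{i=0}^{n-1}|\beta_{d+s+1}^i|$. Up to the overall prefactor $\bar{\mu}(0)$, which cancels upon normalisation by $|w_n|$, this is the cocycle expression appearing in the statement, so the hypothesis of the theorem is genuinely a rate-of-convergence condition on the measures $\pi_e w_n$.

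Next I would match this hypothesis with that of Theorem \ref{moving to w}. By construction $w_n$ is the $\pi_D$-pushforward of $\bar{\mu}$ restricted to the non-negative free-coordinate half of $R_n$. The symmetry $\bar{X} = -\bar{X}$ (inherited from the involution $(a,b) \leftrightarrow (b,a)$ on the pairs defining $\bar{\mu}_n$) together with $\bar{\mu}(x) = \bar{\mu}(-x)$, combined with the symmetry $g(-x) = g(x)$ (which is immediate from $I = -I$ and the definition of $g$), implies
\[
\frac{1}{|\bar{\mu}|_{R_n}|}\int g\, d\pi_e(\bar{\mu}|_{R_n}) = \frac{1}{|w_n|}\int g\, d\pi_e w_n.
\]
Hence the condition assumed in the theorem coincides with that required by Theorem \ref{moving to w}, and absolute continuity of $\nu_{\underline{\beta}}$ follows.

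The main obstacle is not a deep estimate but a careful accounting of two bookkeeping points. First, one must verify that $f$ is well-defined and positive on all of $D$; this uses Condition \ref{condition 1} to guarantee $\pi_c(T^i(0)) \in \mathcal{R}$ for all $i$, so that each $\bar{f}_{s(T^i(0))}$ can be evaluated. Second, the symmetry argument relating $w_n$ to $\bar{\mu}|_{R_n}$ needs attention at the atom $\pi_{\free}(0) = 0$ and at any finitely many atoms that could be double-counted when the forward and backward successor orbits are glued together; these contribute at most a bounded additive error in $|\bar{\mu}|_{R_n}|$ which, by the exponential growth from Lemma \ref{R_n big}, is absorbed exactly as the error $\epsilon_n$ was in Lemma \ref{4.4}. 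Once these are in place the theorem reduces to a direct substitution into Theorem \ref{moving to w}.
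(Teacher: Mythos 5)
Your proposal is correct and takes essentially the same route as the paper: it combines Theorem \ref{moving to w}, Lemma \ref{T is DET} and Proposition \ref{structure of w} with the symmetry relation $\pi_e\bar{\mu}|_{R_n}(x)=\pi_e w_n(x)+\pi_e w_n(-x)$ for $x\neq 0$ (and $\pi_e\bar{\mu}|_{R_n}(0)=\pi_e w_n(0)$), which is precisely the ``observation'' on which the paper's proof rests. Your explicit treatment of the atom at $0$ --- noting that the claimed equality of normalised integrals holds only up to a bounded correction, absorbed via the exponential growth of Lemma \ref{R_n big} --- is, if anything, slightly more careful than the paper's one-line argument.
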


\begin{proof}
The theorem follows from theorem \ref{moving to w}, lemma \ref{T is DET} and proposition \ref{structure of w} after observing that 

\begin{align*}
\pi_e\bar{\mu}|_{R_n}(x)=\begin{cases} \pi_e w_n(x)+\pi_e w_n(-x),\quad &x\in I\setminus\{0\}\\
\pi_e w_n(0), &x=0
\end{cases}.
\end{align*}
\end{proof}



\bibliographystyle{abbrv} 
\bibliography{ABS}

\end{document}